\newcommand{\be}{\begin{equation}}
	\newcommand{\ee}{\end{equation}}
\newtheorem{prop}{Proposition}[section]
\newtheorem{obs}{Remark}[section]
\newtheorem{coro}{Corollary}[section]
\numberwithin{equation}{section}
\numberwithin{figure}{section}
\newtheorem{theorem}{Theorem}[section]
\newtheorem{proposition}[theorem]{Proposition}
\newtheorem{remark}[theorem]{Remark}
\newtheorem{lemma}[theorem]{Lemma}
\newtheorem{definition}[theorem]{Definition}
\begin{document}
	\vglue-1cm \hskip1cm
	\title[Transversal instability of periodic traveling waves]{Transversal spectral instability of periodic traveling waves for the generalized  Zakharov-Kuznetsov equation
	}

	\begin{center}
		
		\subjclass[2000]{35B32, 35B35, 35Q53.}
		
		\keywords{transversal spectral instability, periodic traveling waves, Zakharov-Kuznetsov equation.}

		\maketitle

		{\bf F\'abio Natali}
		
		{Departamento de Matem\'atica - Universidade Estadual de Maring\'a\\
			Avenida Colombo, 5790, CEP 87020-900, Maring\'a, PR, Brazil.}\\
		{ fmanatali@uem.br}

		\vspace{3mm}

	\end{center}
	
	\begin{abstract}
	In this paper, we determine the transversal instability of periodic traveling wave
	solutions of the generalized Zakharov-Kuznetsov equation in two space dimensions. Using an adaptation of the arguments in \cite{nikolay} in the periodic context, it is possible to prove that all positive and one-dimensional $L-$periodic waves are spectrally (transversally) unstable. In addition, when periodic waves that change their sign exist, we also obtain the same property when the associated projection operator defined in the zero mean Sobolev space has only one negative eigenvalue. 
		
	\end{abstract}

	\section{Introduction} 
	
In this paper, we consider the generalized Zahharov-Kuznetsov equation (gZK henceforth)
	\begin{equation}u_t+u^pu_x+(\Delta u)_x=0\label{ZK}\end{equation}
	posed on $\mathbb{T}_L\times \mathbb{R}$, that is, the evolution $u=u(x,y,t)$ is a real-valued function and it is defined in $\mathbb{T}_L\times\mathbb{R}\times\mathbb{R}$. Here, $p>0$ and the set $\mathbb{T}_L$ indicates the $L-$torus. All functions defined on it may be seen as periodic functions on the real line with period $L>0$.\\
	\indent Let us consider periodic traveling waves propagating only in the first variable and with wave speed $c>0$, that is, suppose that $u(x,y,t)=\varphi(x-ct)$ is a solution of $(\ref{ZK})$. Substituting this form into the equation $(\ref{ZK})$, we obtain after integration the following ODE
	\begin{equation}\label{ode1}-\varphi''+c\varphi-\frac{1}{p+1}\varphi^{p+1}+A=0,\end{equation}
	where $A$ is a constant of integration which we will assume zero, that is, $A\equiv0$.\\
	\indent In what follows, we consider the perturbation of the evolution $u(x,y,t)$ associated  to the equation $(\ref{ZK})$ of the form 
	\begin{equation}\label{pertub}
	u(x,y,t)=v(x-ct,y,t)-\varphi(x-ct).
	\end{equation}
After some computations and neglecting the nonlinear terms, we obtain from the equation $(\ref{ode1})$, that $v$ satisfies the following linear equation
\begin{equation}\label{lineareq1}
	v_t=\partial_x(\mathcal{L}-\partial_y^2)v,
\end{equation}
where $\mathcal{L}$ is the linearized operator given by	\begin{equation}\label{operator}\mathcal{L}=-\partial_x^2+c-\varphi^p.\end{equation}
	Suppose that equation $(\ref{lineareq1})$ admits a growing mode solution of the form $v(x,y,t)=e^{\lambda t}e^{iky}w(x)$, where $w$ is an $L-$periodic smooth function. Substituting this form of solution into the equation $(\ref{lineareq1})$, we obtain the following spectral problem
	\begin{equation}\label{specprob1}
		\partial_x(\mathcal{L}+k^2I)w=\lambda w.
		\end{equation}
	\indent Problem $(\ref{specprob1})$ can be seen in an equivalent form as
	\begin{equation}\label{specprobmean}
		\partial_x(Q\mathcal{L}+k^2I)w=\lambda w,
	\end{equation}
	where $Q\mathcal{L}$ is the projection of the operator $\mathcal{L}$ in the space $L_{per,m}^2([0,L])$ constituted by periodic (classes of) functions in $L_{per}^2([0,L])$ with the zero mean property. $Q\mathcal{L}$ is then defined as
	\begin{equation}\label{QL}Q\mathcal{L}=\mathcal{L}+\frac{1}{L}(\varphi^p,\cdot)_{L_{per}^2}.\end{equation}
	For $\lambda\neq0$, it is important to notice that we are forced to consider, because of the problem $(\ref{specprobmean})$, that $w$ has the zero mean property just by integrating both sides of the equality $(\ref{specprobmean})$ over the interval $[0,L]$. In addition, since the linear operator $\partial_x: H_{per,m}^1([0,L])\rightarrow L_{per,m}^2([0,L])$ is invertible with  bounded inverse $\partial_x^{-1}:L_{per,m}^2([0,L])\rightarrow H_{per,m}^1([0,L])$, we can also consider the unique $U\in L_{per,m}^2([0,L])$ such that $w=\partial_x^{-1}U$. This fact enables us to consider the new spectral (and correct) problem in the periodic context given by,
		\begin{equation}\label{specprobmean1}
		(\partial_xQ\mathcal{L}\partial_x^{-1}+k^2I)U=\lambda \partial_x^{-1} U,
	\end{equation}  	
	\indent Therefore, the problem of the transversal stability reduces to a spectral stability problem where the propagation of the wave is considered just in one direction. More specifically, we have:
	\begin{definition}\label{defistab1}
		The periodic wave $\varphi \in H^{2}_{per}([0,L])$ is said to be transversally spectrally stable if $\sigma(\partial_xQ\mathcal{L}\partial_x^{-1}+k^2I) \subset i\mathbb{R}$ in  $L_{per,m}^2([0,L])$ for all $k>0$. Otherwise, that is, if $\sigma(\partial_xQ\mathcal{L}\partial_x^{-1}+k^2I)$ in $L_{per,m}^2([0,L])$ contains a point $\lambda$ with $Re(\lambda)>0$ for some $k>0$, the periodic wave $\varphi$ is said to be transversally spectrally unstable.
	\end{definition}

\indent As far as we can see, the transverse instability of traveling waves associated to the equation $(\ref{ZK})$ has
been determined for both periodic and solitary waves cases. For the case $\mathbb{R}\times\mathbb{R}$, we see that the author in \cite{bridges} determined a geometric condition for the long wavelength transverse instability of solitary solutions by using the multi-symplectic structure of the equation $(\ref{ZK})$. Using the explicit form of the solution for the equation $(\ref{ode1})$ with hyperbolic secant profile (that is, $A=0$ in equation $(\ref{ode1})$), the author derived an index which could be viewed as the Jacobian of a particular map and whose sign determined the transverse stability of the underlying wave. In \cite{johnson}, the author derived sufficient conditions for the transverse instability in case $\mathbb{T}_L\times \mathbb{R}$ by using the well-known result of spectral stability of periodic waves associated to the Korteweg-de Vries equation. The approach then was effectively used (by considering small values of $A$ in $(\ref{ode})$) to conclude the transversal instability of periodic waves with large periods or waves located in a neighbourhood of the stationary solution, that is, when the periodic wave is close up to the solitary wave or the equilibrium solution in the associated phase portrait corresponding to the equation $(\ref{ode})$.\\
\indent In \cite{yamazaki}, the author studied the case $\mathbb{R}\times\mathbb{T}_L$ and $p=1$ in equation $(\ref{ZK})$. Using the arguments in \cite{pego}, it is possible to conclude the transversal stability for $L\in \left(0,\frac{2}{\sqrt{5c}}\right]$ and the transversal instability when $L> \frac{2}{\sqrt{5c}}$. Here, the value $\frac{2}{\sqrt{5c}}$ is associated with the unique negative eigenvalue of the linearized operator $\mathcal{L}=-\partial_x^2+c-2Q_c$, where $Q_c$ is the solitary wave with hyperbolic secant profile. More specifically, we have $\lambda_c=-\frac{5c}{4}$, where $\mathcal{L}\chi=\lambda_c\chi$ for some smooth non-zero periodic function $\chi$. In addition, the author establishes orbital (asymptotic) stability and instability results using the transversal stability and instability previously obtained and jointly with a suitable global well-posedeness results.\\
	\indent The methods presented in our contribution are based on a adaptation of the approach in \cite{nikolay} which established a simple criterion for the transversal instability for solitary waves. The spectral problem in $(\ref{specprobmean1})$ fits in some sense in the framework in \cite{nikolay}, but the main problem is that our operator $\partial_xQ\mathcal{L}\partial_x^{-1}+k^2I$ in $(\ref{specprobmean1})$ is not self-adjoint as requested in \cite{nikolay}. To overcome this difficulty, we prove some facts of spectral theory to guarantee that $\partial_xQ\mathcal{L}\partial_x^{-1}+k^2I$ and the self-adjoint operator $Q\mathcal{L}+k^2I$ have the same spectra. Using some parts of the proof in \cite[Theorem 1.1]{nikolay}, it is possible to prove the existence of $k_0>0$ such that $\dim(\ker(Q\mathcal{L}+k_0^2I))=1$ and since both subspaces $\ker((Q\mathcal{L}+k_0^2I))$ and $\ker(\partial_xQ\mathcal{L}\partial_x^{-1}+k_0^2I)$ have the same dimension (see Lemma $\ref{dimkerlema}$), we obtain that $\dim(\ker(\partial_xQ\mathcal{L}\partial_x^{-1}+k_0^2I))=\dim(\ker((\partial_xQ\mathcal{L}\partial_x^{-1}+k_0^2I)^*))=1.$ The fact $0$ is an isolated eigenvalue of $\partial_xQ\mathcal{L}\partial_x^{-1}+k_0^2I$ enables us to conclude that ${\rm range}(\partial_xQ\mathcal{L}\partial_x^{-1}+k_0^2I)$ is closed and thus, 
	$${\rm codim}({\rm range} (\partial_xQ\mathcal{L}\partial_x^{-1}+k_0^2I))=\dim ({\rm ker}(\partial_xQ\mathcal{L}\partial_x^{-1}+k_0^2I))=1,$$
	that is, $\partial_xQ\mathcal{L}\partial_x^{-1}+k_0^2I$ is a Fredholm operator with zero index. This last fact is suitable to prove the transversal spectral instability of $\varphi$ by using the Lyapunov-Schmidt reduction  to study the eigenvalue problem $(\ref{specprobmean1})$ in the vicinity of $\nu = 0$, $k = k_0$ and $U=\psi$, where $\psi$ is in the kernel of $\partial_xQ\mathcal{L}\partial_x^{-1}+k_0^2I$ and such that $||\psi||_{L_{per}^2} = 1$.
	
\indent Our result is now established. In order to simplify the notation, we define for $k\geq0$ the following linear operators
	\begin{equation}\label{rk}R(k)=Q\mathcal{L}+k^2I\end{equation}
and
\begin{equation}P(k)=\partial_xQ\mathcal{L}\partial_x^{-1}+k^2I.
	\label{pk}\end{equation}
	
	\begin{theorem}\label{mainT}
		 Let $\varphi$ be a positive and periodic solution for the equation $(\ref{ode1})$ with $A=0$. There exist $\nu>0$, $k\neq0$ and $U\in H_{per,m}^2([0,L])\backslash\{0\}$ such that the spectral problem
		\begin{equation}\label{specprob2}P(k)U=\nu \partial_x^{-1}U,
			\end{equation}
		is verified. In particular, the periodic traveling wave solution $\varphi$ is transversally (spectrally) unstable. Moreover, let $p$ be a positive even number and consider $\varphi$ a periodic solution that changes its sign for the equation $(\ref{ode1})$ with $A=0$. The periodic solution $\varphi$ is transversally unstable provided that $R(0)$ has only one negative eigenvalue.
	\end{theorem}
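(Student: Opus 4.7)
My plan is to adapt the solitary-wave criterion of \cite{nikolay} to the periodic setting by transferring the spectral problem from the non-self-adjoint operator $P(k)$ to the self-adjoint operator $R(k)$, and then performing a Lyapunov--Schmidt reduction near a critical $k_0$. The first step is to establish that $\sigma(P(k))=\sigma(R(k))$ in $L_{per,m}^2([0,L])$: if $R(k)\psi=\mu\psi$ with $\psi\in L_{per,m}^2$, then $\phi:=\partial_x\psi\in L_{per,m}^2$ satisfies $P(k)\phi=\mu\phi$, and conversely, applying $\partial_x^{-1}$ gives the reverse correspondence. The same bijection identifies kernels, so that $\dim\ker P(k)=\dim\ker R(k)$, which is precisely the content announced as Lemma $\ref{dimkerlema}$.

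Next, I would track the lowest eigenvalue $\mu(k)$ of the self-adjoint family $R(k)=Q\mathcal{L}+k^{2}I$. For a positive periodic wave, testing $Q\mathcal{L}$ against $\varphi$ (after removing its mean) and invoking the ODE $(\ref{ode1})$ with $A=0$ yields $\langle Q\mathcal{L}\tilde\varphi,\tilde\varphi\rangle_{L^2_{per}}<0$, so $R(0)$ has at least one negative eigenvalue. Since $R(k)\to+\infty$ in the quadratic form sense as $k\to\infty$ and the eigenvalues depend analytically on $k^{2}$, there exists a smallest $k_0>0$ at which $\mu(k_0)=0$. Simplicity of this crossing, combined with the one-dimensional nature of the bottom eigenspace of a Schr\"odinger-type operator on the torus restricted to the zero-mean subspace, yields $\dim\ker R(k_0)=1$. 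By the equivalence of the first step, $\dim\ker P(k_0)=\dim\ker P(k_0)^{*}=1$, and since $0$ is an isolated eigenvalue, $P(k_0)$ is Fredholm of index zero.

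With $\psi\in\ker P(k_0)$ normalized in $L_{per}^{2}$ and $\psi^{*}\in\ker P(k_0)^{*}$, I would perform a Lyapunov--Schmidt reduction on $P(k)U=\nu\partial_x^{-1}U$ near $(U,\nu,k)=(\psi,0,k_0)$. Writing $U=\psi+V$ with $V\in(\ker P(k_0)^{*})^{\perp}$ and applying the complementary projection yields, via the implicit function theorem, a smooth family $V=V(\nu,k)$ of small corrections. The scalar bifurcation equation $\langle P(k)(\psi+V)-\nu\partial_x^{-1}(\psi+V),\psi^{*}\rangle=0$ expands to leading order as
\begin{equation*}
(k^{2}-k_0^{2})\,\alpha \,-\, \nu\,\beta \,+\, \text{h.o.t.} \,=\, 0,
\qquad \alpha:=\langle\psi,\psi^{*}\rangle,\quad \beta:=\langle\partial_x^{-1}\psi,\psi^{*}\rangle.
\end{equation*}
Provided $\alpha,\beta\neq 0$, this solves for $\nu=\nu(k)$ producing a real branch with $\nu>0$ as $k$ moves to the appropriate side of $k_0$, which is exactly the growing mode asserted by the theorem.

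The main obstacle I anticipate is the verification of the non-degeneracy conditions $\alpha\neq 0$ and $\beta\neq 0$, together with the transversality of the zero crossing of $\mu(k)$ at $k_0$. Transversality follows from strict monotonicity of $\mu(k)$ in $k^{2}$ via min--max; the non-vanishing of $\alpha$ and $\beta$ should be checked by exploiting the parity of $\psi$ after translating $\varphi$ to an even function and combining this with the zero-mean constraint that defines $L^2_{per,m}$. For the second statement, with $p$ even and $\varphi$ sign-changing, the direct test-function computation that gave a negative eigenvalue of $R(0)$ in the positive case is unavailable; the hypothesis that $R(0)$ has exactly one negative eigenvalue is precisely what is needed to restart the eigenvalue-tracking argument, after which the Lyapunov--Schmidt step applies verbatim.
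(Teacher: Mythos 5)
Your overall strategy---transferring the spectrum of $P(k)$ to the self-adjoint family $R(k)$, locating $k_0>0$ where $\ker R(k_0)$ is one-dimensional, concluding that $P(k_0)$ is Fredholm of index zero, and running a Lyapunov--Schmidt reduction near $(\psi,0,k_0)$---is the same as the paper's. However, there are two genuine gaps.

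First, the spectral input at $k=0$ is not established. Your test-function computation only purports to show that $R(0)$ has \emph{at least} one negative eigenvalue, and even that is unverified: for zero-mean $u$ one has $(Q\mathcal{L}u,u)_{L^2_{per}}=(\mathcal{L}u,u)_{L^2_{per}}$, and $(\mathcal{L}\tilde\varphi,\tilde\varphi)=(\mathcal{L}\varphi,\varphi)-2\bar\varphi(\mathcal{L}\varphi,1)+\bar\varphi^{2}(\mathcal{L}1,1)$ has no evident sign once the mean $\bar\varphi$ is subtracted. What the argument actually needs is the exact count $n(R(0))=1$ (hypothesis (H4) of \cite{nikolay}) together with simplicity of the bottom eigenvalue; the latter is not the standard Perron--Frobenius fact, because $Q\mathcal{L}$ is a nonlocal rank-one perturbation of a Schr\"odinger operator restricted to the zero-mean subspace. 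The paper devotes Proposition~\ref{propR0} (Krein--Rutman on a cone of even zero-mean functions) to the simplicity, and Proposition~\ref{propLR} (the index formula $n(R(0))=n(\mathcal{L})-n_0-z_0$ combined with $n(\mathcal{L})=1$ for positive waves from \cite{alvesnatali}) to the count. Your appeal to ``the one-dimensional nature of the bottom eigenspace'' does not substitute for either.

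Second, and more seriously, your bifurcation equation is structurally wrong. By Lemma~\ref{dimkerlema} one may take $\psi=\partial_x v$ and $\psi^{*}=\partial_x^{-1}v$ with $v\in\ker R(k_0)$ real, and then the coefficient you call $\beta=\langle\partial_x^{-1}\psi,\psi^{*}\rangle=\langle v,\partial_x^{-1}v\rangle$ vanishes identically, since $\partial_x^{-1}$ is skew-adjoint on $L^2_{per,m}([0,L])$. So the nondegeneracy condition $\beta\neq0$ that you flag as the main point to check can never hold, and the proposed linear solve for $\nu=\nu(k)$ collapses. This cancellation is forced by the Hamiltonian structure and is exactly why the reduction in \cite{nikolay} is carried to second order in $\nu$: the reduced equation has the form $(k-k_0)\,a-\nu^{2}b+\mathrm{h.o.t.}=0$ with $a=(R'(k_0)v,v)_{L^2_{per}}=2k_0\|v\|^{2}>0$ and with the sign of $b$ controlled by the nonnegativity of $R(k_0)$ on $\{v\}^{\perp}$ (this is where choosing $k_0$ as the first zero of $f(k)=\inf_{\|u\|_{L^2_{per}}=1}(R(k)u,u)_{L^2_{per}}$, hence $R(k_0)\geq0$, is used). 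Only then does one obtain a real branch with $\nu>0$. As written, your final step does not produce the growing mode.
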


\begin{remark}
	It is important to mention that the proof of Theorem $\ref{mainT}$ is similar to the proof of \cite[Theorem 1.1]{nikolay} but adapted to the periodic context. As we have mentioned above, our operator $P(k)$ in $(\ref{pk})$ does not satisfy the self-adjointness requirement as specified in \cite{nikolay}. To overcome this difficulty, we must establish a suitable characterization of the spectrum of $P(k)$ in terms of the self-adjoint operator $R(k)$ in $(\ref{rk})$. Additionally, we need to find a non-zero value of $k_0$ such that $P(k_0)$ is a Fredholm operator with zero index. This last condition is crucial for applying the Lyapunov-Schmidt reduction and demonstrating the existence of $\nu>0$, $k\neq0$, and a non-zero $U\in H_{per,m}^2([0,L])$ such that the spectral problem $(\ref{specprob2})$ is verified. To avoid plagiarism, we will omit the portions of our work that coincide with the proof of the main result in \cite{nikolay}. 
\end{remark}
\indent Finally, we present some comments concerning the nonlinear instability of the periodic wave $\varphi$ (see Section 4) that is transversally (spectrally) unstable according to the Theorem $\ref{mainT}$. To do so, we need to use the general setting in \cite{henry} (see also \cite{nikolay1}, \cite{nikolay2} and \cite{nikolay3} for additional references). It is important to mention that the nonlinearity present in the equation $(\ref{ZK})$ is of the form $u^pu_x$, where $p>0$. As far as we know, to prove the nonlinear instability, it is suitable to obtain a convenient global well-posedness result in the energy space $X=H_{per}^1(\mathbb{T}_L\times\mathbb{R})$ for the Cauchy problem associated to the evolution equation. For most values of $p>0$, we do not have any information about the (local) well-posedness result in the energy space $X$, but global solutions in time are expected if $1\leq p <2$ by using the Gagliardo-Nirenberg inequality applied to the conservation law $E$ in $(\ref{conservada1})$. Regarding our study of spectral stability, we do not need a convenient well-posedness result since we can consider smooth solutions $u$ of the form $(\ref{pertub})$. In fact, $\varphi$ is smooth and $v$ is decomposed as a product of exponentials factors in time $e^{\lambda t}$ and in the spatial variable $e^{iky}$, with a smooth solution $w$ depending on $x$. Function $w$ can be considered smooth because of a bootstrapping argument applied to the equation $(\ref{specprob2})$.\\
	\indent Our paper is organized as follows: Section 2 is devoted to present the existence of periodic waves via planar analysis. In Section 3, we prove Theorem $\ref{mainT}$ and present some concrete examples. In Section 4, we present some remarks concerning the nonlinear instability. 
	\section{Existence of periodic solutions via planar analysis.}
	In this section, we present some basic facts concerning the existence of
	periodic solutions for the nonlinear equation $(\ref{ode1})$ for $A=0$, that is,
	\begin{equation}
	-\varphi''+  c\varphi -\frac{1}{p+1}\varphi^{p+1} = 0, \label{ode}
	\end{equation}
	where $c>0$ and $p>0$ are real numbers. 
	
	It is well known that quation (\ref{ode}) is
	conservative and the eventual periodic solutions are contained on the level
	curves of the energy
	\begin{equation}\label{energyODE}
		\mathcal{E}(\varphi, \xi) =  \frac{\xi^2}{2} -\frac{c\varphi^2}{2}+\frac{\varphi^{p+2}}{(p+1)(p+2)},
	\end{equation}
	where $\xi=\varphi'$.\\
	\indent By classical theories of ordinary differential equations (see \cite{chicone} for further details), we see that $\varphi$ is a periodic solution of the equation $(\ref{ode})$ if, and only if, $(\varphi,\varphi')$ is a periodic orbit of the planar differential system
	\begin{equation}\label{planarODE}
		\left\{\begin{array}{lllll}
			\varphi'=\xi,\\\\
			\xi'=c\varphi-\frac{1}{p+1}\varphi^{p+1}.
		\end{array}\right.
	\end{equation}
	\indent The periodic orbits for the equation $(\ref{planarODE})$ can be determined by considering the energy levels of the function $\mathcal{E}$ defined in $(\ref{energyODE})$. This means that the pair $(\varphi,\xi)$ satisfies the equation $\mathcal{E}(\varphi,\xi)=B$. If $p>0$ and $B\in \left(B_0,0\right)$, we obtain periodic orbits which turn round at the equilibrium points $(((p+1)c)^{1/p},0)$. Here, $B_0$ is a negative number defined as $B_0=-\frac{p(p+1)^{\frac{2}{p}}c^{\frac{p+2}{p}}}{2(p+2)}$. In our specific case, we see that $(\ref{planarODE})$ has at least two critical points, being one saddle point at $(\varphi,\xi)=(0,0)$ and one center point at $(\varphi,\xi)=(((p+1)c)^{1/p} ,0)$. According to the standard theories of ordinary differential equations, the periodic orbits emanate from the center points to the separatrix curve which is represented by a smooth solution $\widetilde{\varphi}:\mathbb{R}\rightarrow\mathbb{R}$ of $(\ref{ode})$ satisfying $\lim_{x\rightarrow \pm \infty}\widetilde{\varphi}^{(n)}(x)=0$ for all $n\in\mathbb{\mathbb{N}}$. When $p$ is in particular an even integer, we see that the presence of two symmetric center points $(\pm ((p+1)c)^{1/p},0)$ allows to conclude that the periodic orbits which turn around these points can be negative and positive. Outside the separatrix, we have the existence of periodic solutions that change their sign. Indeed, if $B>0$ we also have periodic orbits and the corresponding periodic solutions  $\varphi$ with the zero mean property, that is, periodic solutions satisfying  $\int_0^L\varphi(x)dx=0$. Independently of the type of periodic solutions which we are working on, the period $L=L(B)$ of the solution $\varphi$ can be expressed (formally) by
	
	\begin{equation}
	L=\displaystyle 2\int_{b_1}^{b_2}\frac{dh}{\sqrt{-\frac{2h^{p+2}}{(p+1)(p+2)}+ ch^2+2B}},
	\label{persol}\end{equation}
	where $b_1=\displaystyle\min_{x\in [0,L]}\varphi(x)$ and $b_2=\displaystyle\max_{x\in [0,L]}\varphi(x)$.\\
	\indent On the other hand, the energy levels of the first integral $\mathcal{E}$ in $(\ref{energyODE})$ parametrize the unbounded set of periodic orbits $\{\Gamma_B\}_B$ which emanate from the separatrix curve. Thus, we can conclude that the set of smooth periodic solutions of $(\ref{ode})$ can be expressed by a smooth family $\varphi=\varphi_{B}$ which is parametrized by the value $B$. 
	Moreover, when $B\in \left(B_0,0\right)$, we see that if $B\rightarrow B_0$, we have $L\rightarrow  \alpha(c)>0$ (stationary solution), and if $B\rightarrow0$, one has $L\rightarrow +\infty$ (solitary wave solution). On the other hand, when $B\in (0,+\infty)$, we see that if $B\rightarrow 0$, we have $L\rightarrow +\infty$ (solitary wave solution), and if $B\rightarrow+\infty$, we obtain $L\rightarrow 0$.

	\section{Transversal spectral instability of periodic waves for gZK.}
	\indent Before proving our main result, we need some basic facts concerning spectral theory. The first result is now given and establishes a similarity regarding the resolvent set of the operators $R(k)$ and $P(k)$ for all $k\geq0$.
	\begin{lemma}\label{lemaresolvente} Let $R(k)$ and $P(k)$ be the linear operators defined in $(\ref{rk})$ and $(\ref{pk})$, respectively. For all $k\geq0$, we have $\rho(R(k))=\rho(P(k))$, where $\rho(\mathcal{A})$ indicates the resolvent set of certain linear operator $\mathcal{A}$. 
	\begin{proof} It suffices to prove the result for the case $k=0$ since $\partial_xQ\mathcal{L}\partial_x^{-1}+k^2I=\partial_x(Q\mathcal{L}+k^2I)\partial_x^{-1}$. Let $\lambda\in \rho(P(0))$ be fixed. Thus $P(0)-\lambda I$ is invertible and for each $v\in L_{per,m}^2([0,L])$, there exists a unique $u\in H_{per,m}^2([0,L])$ such that $(P(0)-\lambda I)u=v$. On the other hand, since $\partial_x:H_{per,m}^1([0,L])\rightarrow L_{per,m}^2([0,L])$ is also invertible, there exists a unique $w\in H_{per,m}^1([0,L])$ such that $v=\partial_x w$. Since $w=\partial_x^{-1}v$, we obtain, from the fact $(Q\mathcal{L}-\lambda I)\partial_x^{-1}u=\partial_x^{-1}v$ that $(Q\mathcal{L}-\lambda I)r=w,$ where $r=\partial_x^{-1}u$. Then, we deduce $Q\mathcal{L}-\lambda I$ is invertible and since $Q\mathcal{L}-\lambda I$ is a self-adjoint closed operator with ${\rm ker} (Q\mathcal{L}-\lambda I)=\{0\}$ and ${\rm range}(Q\mathcal{L}-\lambda I)=L_{per,m}^2([0,L])$, we have from the closed graph theorem $\lambda \in \rho (R(0))$, that is, $\rho(P(0))\subset \rho(R(0)$.\\
	\indent Next, let $\lambda\in \rho(R(0))$ be fixed. Since $Q\mathcal{L}-\lambda I$ is invertible and $\partial_xQ\mathcal{L}\partial_x^{-1}-\lambda I=\partial_x(Q\mathcal{L}-\lambda I)\partial_x^{-1}$, we see that $\partial_xQ\mathcal{L}\partial_x^{-1}-\lambda I$ is invertible and 
	$(\partial_xQ\mathcal{L}\partial_x^{-1}-\lambda I)^{-1}=\partial_x(Q\mathcal{L}-\lambda I)^{-1}\partial_x^{-1}$. On the other hand, the adjoint operator of $P(0)$ is given by $P(0)^{*}=\partial_x^{-1}Q\mathcal{L}\partial_x$ and it is defined in $L_{per,m}^2([0,L])$ with dense domain $H_{per,m}^2([0,L])$. Since $(\partial_x^{-1}Q\mathcal{L}\partial_x)^{*}=\partial_xQ\mathcal{L}\partial_x^{-1}$ is a closed operator with ${\rm ker}(\partial_xQ\mathcal{L}\partial_x^{-1}-\lambda I)=\{0\}$ and ${\rm range}(\partial_xQ\mathcal{L}\partial_x^{-1}-\lambda I)=L_{per,m}^2([0,L])$, we obtain again by the closed graph theorem that $\lambda\in \rho(P(0))$, so that $\rho(R(0))\subset \rho(P(0))$. The lemma is now concluded.
		\end{proof}
	
		\end{lemma}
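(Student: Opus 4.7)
The plan is to reduce the full statement to the case $k=0$ and then handle the two inclusions $\rho(R(0))\subset\rho(P(0))$ and $\rho(P(0))\subset\rho(R(0))$ separately, exploiting the formal conjugation identity
$$P(0)-\lambda I=\partial_x\bigl(R(0)-\lambda I\bigr)\partial_x^{-1}.$$
The reduction is immediate since $R(k)-\lambda I=R(0)-(\lambda-k^2)I$ and similarly $P(k)-\lambda I=P(0)-(\lambda-k^2)I$, hence $\rho(R(k))=k^2+\rho(R(0))$ and $\rho(P(k))=k^2+\rho(P(0))$, and the assertion for general $k\geq 0$ follows from the case $k=0$. The workhorse throughout is the fact that $\partial_x\colon H_{per,m}^1([0,L])\to L_{per,m}^2([0,L])$ is a bounded isomorphism with bounded inverse, which is what promotes the formal conjugation to a genuine similarity between unbounded operators.

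For the inclusion $\rho(R(0))\subset\rho(P(0))$, given $\lambda\in\rho(R(0))$ I would define
$$T_\lambda:=\partial_x\bigl(R(0)-\lambda I\bigr)^{-1}\partial_x^{-1}\colon L_{per,m}^2([0,L])\to L_{per,m}^2([0,L]),$$
verify boundedness (using that $(R(0)-\lambda I)^{-1}\colon L_{per,m}^2\to H_{per,m}^2$ is bounded and that $\partial_x,\partial_x^{-1}$ are bounded between the relevant Sobolev spaces), and check by direct composition that $T_\lambda\bigl(P(0)-\lambda I\bigr)=I$ on $H_{per,m}^2([0,L])$ and $\bigl(P(0)-\lambda I\bigr)T_\lambda=I$ on $L_{per,m}^2([0,L])$, thereby realising $\lambda\in\rho(P(0))$.

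The converse inclusion $\rho(P(0))\subset\rho(R(0))$ is subtler and I would approach it via adjoints. On $L_{per,m}^2$, integration by parts in the zero-mean space yields $\partial_x^*=-\partial_x$ and consequently $(\partial_x^{-1})^*=-\partial_x^{-1}$; combining this with the self-adjointness of $Q\mathcal{L}$ produces the identity $P(0)^*=\partial_x^{-1}Q\mathcal{L}\partial_x$. If $\lambda\in\rho(P(0))$, then $\overline{\lambda}\in\rho(P(0)^*)$, and the identity $P(0)^*-\overline{\lambda}I=\partial_x^{-1}\bigl(R(0)-\overline{\lambda}I\bigr)\partial_x$ can be conjugated by $\partial_x$ to exhibit the bounded inverse
$$\bigl(R(0)-\overline{\lambda}I\bigr)^{-1}=\partial_x\bigl(P(0)^*-\overline{\lambda}I\bigr)^{-1}\partial_x^{-1}.$$
Since $R(0)$ is self-adjoint, its resolvent set is invariant under complex conjugation, and therefore $\lambda\in\rho(R(0))$.

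The main obstacle I anticipate is the domain bookkeeping: one must verify that the candidate inverses genuinely land in $H_{per,m}^2$ when acting on the correct subspaces, that the conjugation identity for $P(0)-\lambda I$ is an equality of closed operators (not merely a formal manipulation), and that the adjoint formula $P(0)^*=\partial_x^{-1}Q\mathcal{L}\partial_x$ is valid on a dense domain in $L_{per,m}^2$. Once these technicalities are in place, the self-adjointness of $R(0)$ together with the closed graph theorem handle the remaining work.
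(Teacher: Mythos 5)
Your proof is correct and follows essentially the same route as the paper: both reduce to $k=0$ and exploit the similarity $P(0)-\lambda I=\partial_x(R(0)-\lambda I)\partial_x^{-1}$ through the bounded isomorphism $\partial_x\colon H_{per,m}^1([0,L])\to L_{per,m}^2([0,L])$, realizing the resolvent of $P(0)$ as $\partial_x(R(0)-\lambda I)^{-1}\partial_x^{-1}$. The only cosmetic difference is in the inclusion $\rho(P(0))\subset\rho(R(0))$, where you pass to the adjoint $P(0)^*=\partial_x^{-1}Q\mathcal{L}\partial_x$ and use the conjugation symmetry of the resolvent set of the self-adjoint $R(0)$, whereas the paper shows directly that $Q\mathcal{L}-\lambda I$ is a closed bijection and invokes the closed graph theorem (it uses the same adjoint identity in the other inclusion).
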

	
\begin{remark}\label{remessespec} Lemma $\ref{lemaresolvente}$ enables us to conclude that $\sigma(P(k))=\sigma(R(k))$ for all $k\geq0$, where $\sigma(\mathcal{A})$ denotes the spectrum set of a certain linear operator $\mathcal{A}$. Since $\sigma(R(k))$ is also constituted only by a discrete set of eigenvalues accumulating at the infinity, we obtain that $\sigma(P(k))$ is constituted only by discrete eigenvalues with the same behaviour. As a consequence of this fact is that there is no essential spectrum associated to the operator $P(k)$.
\end{remark}	
	
\begin{lemma}\label{closedrange}
Suppose that $0\in \sigma(R(k_0))$ for some $k_0>0$. We have that ${\rm range}(P(k_0))$ is a closed set in $L_{per,m}^2([0,L])$.
\end{lemma}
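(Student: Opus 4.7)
My plan is to exploit the hypothesis to conclude that $0$ is an \emph{isolated} eigenvalue of $P(k_0)$ and then use the Riesz spectral projection to split $L_{per,m}^2([0,L])$ into $P(k_0)$-invariant closed subspaces, one finite-dimensional and one on which $P(k_0)$ is invertible. Such a decomposition yields closedness of the range at once, because the sum of a finite-dimensional subspace and a closed subspace is closed.

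First, I would invoke Lemma \ref{lemaresolvente} and Remark \ref{remessespec} to see that $\sigma(P(k_0)) = \sigma(R(k_0))$ consists only of a discrete set of eigenvalues accumulating at infinity, so the hypothesis $0 \in \sigma(R(k_0))$ gives that $0$ is isolated in $\sigma(P(k_0))$. Choose a small positively oriented circle $\Gamma \subset \rho(P(k_0))$ around $0$ that encloses no other spectral point, and define the Riesz projection
\begin{equation*}
\Pi_0 = \frac{1}{2\pi i}\oint_\Gamma (\lambda I - P(k_0))^{-1}\, d\lambda,
\end{equation*}
which is a bounded projection on $L_{per,m}^2([0,L])$ commuting with $P(k_0)$, producing the decomposition $L_{per,m}^2([0,L]) = X_0 \oplus X_1$ with $X_0 = {\rm range}(\Pi_0)$, $X_1 = {\rm ker}(\Pi_0)$, both closed and $P(k_0)$-invariant.

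Next, I would verify that $X_0$ is finite-dimensional. From the proof of Lemma \ref{lemaresolvente}, the resolvent factorizes as $(\lambda I - P(k_0))^{-1} = \partial_x (\lambda I - R(k_0))^{-1} \partial_x^{-1}$ for $\lambda \in \rho(P(k_0)) = \rho(R(k_0))$; commuting the contour integral with the bounded operators $\partial_x$ and $\partial_x^{-1}$ yields $\Pi_0 = \partial_x \Pi_0^R \partial_x^{-1}$, where $\Pi_0^R$ is the Riesz projection of $R(k_0)$ at $0$. Since $R(k_0)$ is self-adjoint with compact resolvent (via the compact embedding $H_{per,m}^2([0,L]) \hookrightarrow L_{per,m}^2([0,L])$), $\Pi_0^R$ has finite rank, and therefore so does $\Pi_0$.

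Finally, the conclusion is immediate: on $X_1$ the restriction $P(k_0)|_{X_1}$ has $0 \notin \sigma(P(k_0)|_{X_1})$ and is therefore invertible, so $P(k_0)(X_1 \cap H_{per,m}^2([0,L])) = X_1$, which is closed; on the finite-dimensional space $X_0$, the image $P(k_0)(X_0)$ is automatically closed. Hence
\begin{equation*}
{\rm range}(P(k_0)) = P(k_0)(X_0) + X_1
\end{equation*}
is the sum of a finite-dimensional subspace and a closed subspace, which is closed in $L_{per,m}^2([0,L])$. The main obstacle is justifying the finite-dimensionality of $X_0$, since without it the Riesz decomposition alone does not force closed range; I would handle this through the conjugation identity for the Riesz projections and the compact resolvent of the self-adjoint elliptic operator $R(k_0)$.
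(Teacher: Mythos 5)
Your argument is correct, but it follows a genuinely different route from the paper's. The paper proves closedness of ${\rm range}(P(k_0))$ by invoking the criterion that a closed, densely defined operator $T$ has closed range if and only if $0$ is not an accumulation point of $\sigma(T^*T)$; it then argues that $P(k_0)^*P(k_0)$ has purely discrete spectrum via the compact embedding $H_{per,m}^2([0,L])\hookrightarrow L_{per,m}^2([0,L])$, so that $0$ is at worst an isolated point of that spectrum. You instead build the Riesz projection $\Pi_0$ at the isolated spectral point $0$ of $P(k_0)$, prove it has finite rank by conjugating the corresponding projection of the self-adjoint operator $R(k_0)$ with $\partial_x$, and conclude from the resulting invariant decomposition $X_0\oplus X_1$ with $X_0$ finite-dimensional and $P(k_0)|_{X_1}$ invertible. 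Both proofs rest on the same two inputs (Remark \ref{remessespec} making $0$ isolated, and compactness of the resolvent), but your version has the advantage of producing in one stroke the finite-dimensionality of the spectral subspace, hence essentially the Fredholm structure the paper needs afterwards in Remark \ref{remfred}, and it avoids handling the somewhat delicate domain of $P(k_0)^*P(k_0)$. The one point to polish is the interchange of $\partial_x$ with the contour integral: $\partial_x$ is not bounded on $L_{per,m}^2([0,L])$, so you should either observe that $\oint_\Gamma(\lambda I-R(k_0))^{-1}\partial_x^{-1}\,d\lambda$ converges in $H_{per,m}^2([0,L])$ (the integrand being continuous with values there) and that $\partial_x$ is bounded from $H_{per,m}^2([0,L])$ to $L_{per,m}^2([0,L])$, or apply the closedness of $\partial_x$ to the Riemann sums; this is a presentational fix, not a gap.
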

	\begin{proof}
	We see that $P(k_0)$ is a closed linear operator defined in the Hilbert space $L_{per,m}^2([0,L])$ with dense domain $H_{per,m}^2([0,L])$. Thus, it suffices to prove that $0$ is not an accumulation point of the spectrum $\sigma(P(k_0)^{*}P(k_0))$ of $P(k_0)^{*}P(k_0)$. In fact, we see by hypothesis of the lemma and Remark $\ref{remessespec}$ that $0$ is an isolated eigenvalue of the linear operator $P(k_0)$. Since $P(k_0)$ is a closed linear operator, we obtain $P(k_0)^{**}=P(k_0)$, so that $P(k_0)^{*}P(k_0)$ is self-adjoint. Because of the compact embeddings $H_{per,m}^2([0,L])\hookrightarrow H_{per,m}^1([0,L])\hookrightarrow L_{per,m}^2([0,L])$, we have for $\mu>0$ large enough that the operator $P(k_0)^*P(k_0)+\mu I$ is invertible with bounded inverse. Using the spectral theorem for compact and self-adjoint operators, it follows that the spectrum $\sigma(P(k_0)^{*}P(k_0)+\mu I)$ of $P(k_0)^*P(k_0)+\mu I$ is constituted only by a discrete set of eigenvalues, so that the same behaviour occurs for the spectrum $\sigma(P(k_0)^{*}P(k_0))$ of $P(k_0)^*P(k_0)$. Thus $0$ is an isolated point of the spectrum $\sigma(P(k_0)^*P(k_0))$ of $P(k_0)^*P(k_0)$ and ${\rm range}(P(k_0))$ is closed as requested.
	\end{proof}
	
\begin{remark}\label{remfred} Lemma $\ref{closedrange}$ guarantees that if $0\in \sigma(R(k_0))$ for some $k_0>0$, we obtain ${\rm range}(P(k_0))$ is a closed subspace of $L_{per,m}^2([0,L])$ and therefore, $${\rm codim}({\rm range} (P(k_0)))=\dim ({\rm ker}(P(k_0)^*)).$$ 

\end{remark}

\begin{lemma}\label{dimkerlema}
For all $k\geq0$, we have that $\dim ({\rm ker}(P(k)^*))=\dim ({\rm ker}(P(k)))=\dim({\rm ker}(R(k)))$.

\end{lemma}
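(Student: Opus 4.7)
The plan is to exploit the factorisation
\begin{equation*}
P(k) \;=\; \partial_x R(k)\,\partial_x^{-1},
\end{equation*}
which is immediate from $(\ref{rk})$ and $(\ref{pk})$: both sides act on $L_{per,m}^2([0,L])$ with dense domain $H_{per,m}^2([0,L])$, and the factorisation makes sense because $\partial_x\colon H_{per,m}^1([0,L])\to L_{per,m}^2([0,L])$ is an isomorphism with inverse mapping $L_{per,m}^2([0,L])$ into $H_{per,m}^1([0,L])$ (as recalled in the introduction). First I would establish the companion identity $P(k)^{*} = \partial_x^{-1} R(k)\, \partial_x$. Because $\partial_x$ is skew-adjoint on the zero-mean space (integration by parts, with boundary terms vanishing by periodicity, so that $(\partial_x^{-1})^{*}=-\partial_x^{-1}$ and $\partial_x^{*}=-\partial_x$) and $R(k)$ is self-adjoint, one has
\begin{equation*}
P(k)^{*} \;=\; (\partial_x^{-1})^{*}\,R(k)^{*}\,\partial_x^{*} \;=\;(-\partial_x^{-1})\,R(k)\,(-\partial_x)\;=\;\partial_x^{-1}R(k)\,\partial_x.
\end{equation*}

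Next I would produce explicit linear isomorphisms between the three kernels. For the bijection $\ker(R(k))\to\ker(P(k))$ I define $\Phi(v)=\partial_x v$. Elements of $\ker(R(k))$ are smooth by elliptic regularity (they solve a linear ODE with smooth coefficients), so $\partial_x v\in H_{per,m}^{2}([0,L])$, and $P(k)(\partial_x v)=\partial_x R(k)v=0$. Conversely, if $u\in\ker(P(k))$, then $R(k)\partial_x^{-1}u\in\ker(\partial_x\vert_{H_{per,m}^{1}})=\{0\}$, so $\partial_x^{-1}u\in\ker(R(k))$ and $u=\Phi(\partial_x^{-1}u)$; injectivity of $\Phi$ is clear since $\partial_x$ is injective on $H_{per,m}^{1}([0,L])$. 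An identical argument using the formula for $P(k)^{*}$ shows that $\Psi(v)=\partial_x^{-1}v$ furnishes a linear bijection $\ker(R(k))\to\ker(P(k)^{*})$: given $u\in\ker(P(k)^{*})$, the injectivity of $\partial_x^{-1}$ on $L_{per,m}^{2}([0,L])$ forces $\partial_x u\in\ker(R(k))$, and $u=\partial_x^{-1}(\partial_x u)=\Psi(\partial_x u)$.

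Finally I would note that all three kernels are finite-dimensional (this is already implicit in Remark $\ref{remessespec}$, since the spectrum of $R(k)$, and hence of $P(k)$, is a discrete set of eigenvalues of finite multiplicity), so the existence of a linear bijection translates into equality of dimensions, giving $\dim(\ker(R(k)))=\dim(\ker(P(k)))=\dim(\ker(P(k)^{*}))$.

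There is no serious obstacle in this argument; the only thing requiring care is bookkeeping of the domains and verifying that $\partial_x$ and $\partial_x^{-1}$ do not take us outside the zero-mean space. This is automatic: any periodic derivative has zero mean by the fundamental theorem of calculus, and $\partial_x^{-1}$ is defined precisely as the inverse of $\partial_x$ restricted to the zero-mean subspaces, so the two explicit maps above are well-defined on the relevant domains.
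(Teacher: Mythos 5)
Your proposal is correct and follows essentially the same route as the paper: the paper also uses the factorisation $P(k)=\partial_x R(k)\partial_x^{-1}$ and its adjoint counterpart, showing that if $\{v_1,\dots,v_n\}$ is a basis of $\ker(R(k))$ then $\{\partial_x v_j\}$ and $\{\partial_x^{-1}v_j\}$ are bases of $\ker(P(k))$ and $\ker(P(k)^{*})$ respectively, which is exactly your pair of bijections $\Phi$ and $\Psi$ phrased in terms of bases. The only cosmetic difference is that you make the adjoint identity $P(k)^{*}=\partial_x^{-1}R(k)\partial_x$ and the injectivity/surjectivity checks explicit, whereas the paper leaves the adjoint case as ``similar.''
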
	

\begin{proof} By spectral theorem for compact and self-adjoint operators, it follows that $\dim({\rm ker}(R(k)))$ is finite. Let us consider $\{v_1,v_2,\cdots,v_n\}$ a basis for ${\rm ker}(R(k))$. We see that $\{\partial_x v_1,\partial_x v_2,\cdots,\partial_x v_n\}$ is a basis for ${\rm ker}(P(k))$ while $\{\partial_x^{-1} v_1,\partial_x^{-1} v_2,\cdots,\partial_x^{-1} v_n\}$ is a basis for ${\rm ker}(P(k)^*)$. In fact, we verify only the first claim since the second one is similar. Let $w$ be an element in ${\rm ker}(P(k))$. Since $(\partial_xQ\mathcal{L}\partial_x^{-1}+k^2I)w=\partial_x(Q\mathcal{L}+k^2)\partial_x^{-1}w=0$, we see that $(Q\mathcal{L}+k^2)\partial_x^{-1}w=0$ and $\partial_x^{-1}w$ belongs to ${\rm ker}(R(k))$. Consequently, $w$ can be written uniquely as a linear combination of the elements $\partial_x v_1,\partial_x v_2,\cdots,\partial_x v_n$. Thus, we get that $\{\partial_x v_1,\partial_x v_2,\cdots,\partial_x v_n\}$ determines a basis for ${\rm ker}(P(k))$ and the proof of the lemma is now completed.
\end{proof}

\begin{proposition}\label{propR0}
	The first eigenvalue of $R(0)$ defined in $(\ref{rk})$ is negative, simple and it can be considered an even periodic function.
\end{proposition}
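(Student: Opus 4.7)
The plan is to combine the variational characterization of the first eigenvalue with the reflection symmetry of $\varphi$ and classical Hill--Sturm theory. The operator $R(0)=Q\mathcal{L}$ is self-adjoint on $L_{per,m}^2([0,L])$ with compact resolvent, so its spectrum is a discrete real sequence bounded below. Moreover $(R(0)f,f)=(\mathcal{L}f,f)$ for $f\in L_{per,m}^2([0,L])$, since the rank-one correction in $(\ref{QL})$ vanishes on zero-mean functions; the first eigenvalue therefore equals the Rayleigh-quotient infimum of $(\mathcal{L}f,f)/\|f\|^2$ over such $f$. Translating $\varphi$ to be even, $\mathcal{L}$ (hence $Q\mathcal{L}$) commutes with the reflection $x\mapsto -x$, so $L_{per,m}^2([0,L])$ splits invariantly into its even and odd parts.

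On the odd part the zero-mean condition is automatic, and $Q\mathcal{L}$ reduces to $\mathcal{L}$ there. Differentiating $(\ref{ode})$ in $x$ yields $\mathcal{L}\varphi'=0$, and since $\varphi'$ is odd with exactly two zeros per period, Hill--Sturm theory identifies it as the lowest odd eigenfunction with eigenvalue $0$. On the subspace of even periodic functions the spectrum of $\mathcal{L}$ is a simple sequence $\lambda_0<\lambda_1^{\mathrm{ev}}<\cdots$, with $\lambda_0<0$ and positive first eigenfunction $\chi_0$. A Lagrange multiplier computation on the codimension-one subspace defined by $(f,1)=0$ shows that the first eigenvalue $\mu_0^{\mathrm{ev}}$ of $Q\mathcal{L}$ there is the smallest root in $(\lambda_0,\lambda_1^{\mathrm{ev}})$ of the strictly increasing secular function $G(\mu):=\sum_j|(\chi_j^{\mathrm{ev}},1)|^2/(\lambda_j^{\mathrm{ev}}-\mu)$, with $G(\mu)\to-\infty$ as $\mu\to\lambda_0^+$. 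Hence $\mu_0^{\mathrm{ev}}<0$ is equivalent to $G(0)=(\mathcal{L}^{-1}1,1)>0$, with $\mathcal{L}^{-1}$ understood on the even subspace where $\mathcal{L}$ is invertible.

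To verify $(\mathcal{L}^{-1}1,1)>0$ I would differentiate the ODE $(\ref{ode1})$ with respect to the integration constant $A$ at $A=0$ along the smooth curve of periodic solutions with fixed speed $c$ and fixed period $L$ supplied by Section~2; this gives $\mathcal{L}(\partial_A\varphi)|_{A=0}=-1$, so $(\mathcal{L}^{-1}1,1)=-\partial_A\!\int_0^L\varphi\,dx\,\big|_{A=0}$, and the required sign follows from the monotonicity of the mean of $\varphi$ along this curve, read off from the phase-portrait analysis of Section~2. Once $\mu_0^{\mathrm{ev}}<0$ is established it is automatically simple as the unique smallest root of the strictly monotone $G$, the corresponding eigenfunction is even by construction, and the overall first eigenvalue of $R(0)$ equals $\mu_0^{\mathrm{ev}}<0$, strictly below the odd contribution $0$. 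The main obstacle is this sign computation: purely abstract spectral arguments deliver only $\mu_0\le 0$, and strict negativity must be extracted from the specific structure of the positive periodic wave via the parameter-dependence identity above, or equivalently through a direct test-function argument built from the ODE identity $\mathcal{L}\varphi=-\tfrac{p}{p+1}\varphi^{p+1}$ and the derived relations $\int\varphi^{p+1}=c(p+1)L\bar\varphi$, $\int(\varphi')^2+c\int\varphi^2=\tfrac{1}{p+1}\int\varphi^{p+2}$.
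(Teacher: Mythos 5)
Your route is genuinely different from the paper's. The paper proves negativity, simplicity and evenness in one stroke by a positivity argument: for $k_1$ large the inverse $R(k_1)^{-1}$ is compact and totally positive on a solid cone of even, zero-mean functions that are nonnegative on a quarter period (via the one-dimensional maximum principle), and the Krein--Rutman theorem then yields a simple principal eigenvalue with eigenfunction in the interior of that cone. You instead split $L^2_{per,m}$ into even and odd parts and run the constrained-eigenvalue/secular-equation machinery. For a positive wave your reductions are sound: the quadratic forms of $R(0)$ and $\mathcal{L}$ agree on zero-mean functions, the odd sector carries $\mathcal{L}\ge 0$ with kernel $\varphi'$, and on the even sector the criterion $\mu_0^{\mathrm{ev}}<0\iff(\mathcal{L}^{-1}1,1)>0$ is the correct one.

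The genuine gap is exactly where you flag it: you never prove $(\mathcal{L}^{-1}1,1)>0$. The identity $\mathcal{L}(\partial_A\varphi)\big|_{A=0}=-1$ presupposes a smooth family of solutions of $(\ref{ode1})$ with \emph{both} $c$ and the period $L$ held fixed as $A$ varies; Section~2 constructs no such family (it only treats $A=0$ and parametrizes by the energy level $B$), and the ``monotonicity of the mean along this curve'' is asserted, not derived --- nothing in the phase-portrait discussion gives its sign. Worse, within the paper's own architecture this sign is a \emph{consequence} of the proposition rather than an input: Proposition~\ref{propLR} deduces $n_0=z_0=0$, i.e.\ $(\mathcal{L}^{-1}1,1)>0$, precisely from the fact $n(R(0))\ge 1$ supplied by Proposition~\ref{propR0}, so taking the sign as a hypothesis makes the argument circular relative to the paper unless you supply an independent proof. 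Finally, the proposition is also invoked for sign-changing waves, where $n(\mathcal{L})=2$ and, as Example b) shows, the sign of $(\mathcal{L}^{-1}1,1)$ genuinely depends on $c$; your analysis, built on a positive ground state and a single negative eigenvalue of $\mathcal{L}$, does not cover that case, whereas the paper's cone argument is indifferent to the sign of $\varphi$.
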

\begin{proof}
	Let $k_1>0$ be a fixed positive integer (large enough) and consider the associated operator $R(k_1)=Q\mathcal{L}+k_1^2I$. Define the basic set $$\mathcal{C}=\left\{u\in C_{per,m,e}^{\infty}([0,L]);\ u\geq0\ \mbox{in}\ \left[0,\frac{L}{4}\right]\right\},$$
	where $C_{per,m,e}^{\infty}([0,L])$ denotes the space constituted by smooth even $L-$periodic functions with the zero mean property. We see that $\mathcal{C}$ is a cone since for all $\beta\geq0$ and $u\in\mathcal{C}$, we have $\beta u\in \mathcal{C}$ and $\mathcal{C}\cap (-\mathcal{C})=\{0\}$, where $-\mathcal{C}$ is defined as $-\mathcal{C}=\left\{u\in C_{per,m,e}^{\infty}([0,L]);\ u\leq0\ \mbox{in}\ \left[0,\frac{L}{4}\right]\right\}$. The interior set of $\mathcal{C}$, denoted by $\mathcal{C}^o$, is clearly non-empty and then, $\mathcal{C}$ is called a solid cone. For $k_1>0$ large enough, we have that $R(k_1)^{-1}$ is defined in $L_{per,m,e}^2([0,L])$, it has bounded inverse and   $R(k_1)^{-1}:L_{per,m,e}^2([0,L])\rightarrow L_{per,m,e}^2([0,L])$ is a compact operator. Suppose that $v\in C_{per,m,e}^{\infty}([0,L])$ satisfies $v>0$ in $\left[0,\frac{L}{4}\right]$. There is a unique $u\in C_{per,m,e}^{\infty}([0,L])$ such that $v=R(k_1)u$. By an application of the maximum principle in the one-dimensional case (see \cite[Chapter 1, Theorem 23]{protter}), we obtain $u>0$ in $\left[0,\frac{L}{4}\right]$, that is, $R(k_1)^{-1}$ is totally positive. Using the standard Krein-Rutman Theorem, we obtain the existence of $w\in \mathcal{C}^o$ such that $(Q\mathcal{L}+k_1^2I)^{-1}w=\varrho w,$ where $\varrho>0$ is the spectral radius of the operator $(Q\mathcal{L}+k_1^2I)^{-1}$ with $\varrho$ being a simple isolated eigenvalue. Since $k_1>0$ is large enough, the first 
	eigenvalue of $Q\mathcal{L}$ is negative and simple.
\end{proof}

\begin{proposition}\label{propLR}
Let $\mathcal{L}$ be the operator defined in $(\ref{operator})$ and suppose that $\mathcal{L}$ has only one negative eigenvalue. Thus $R(0)=Q\mathcal{L}$ defined in $(\ref{rk})$ has only one negative eigenvalue.
\end{proposition}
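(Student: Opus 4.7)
The plan is to observe that on the zero-mean subspace $L_{per,m}^2([0,L])$ the quadratic form of $Q\mathcal{L}$ coincides with that of $\mathcal{L}$, and then to apply the min-max principle to transfer the one-negative-eigenvalue hypothesis on $\mathcal{L}$ to $R(0)=Q\mathcal{L}$. The missing lower bound on the negative index will be supplied by Proposition \ref{propR0}.

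First, I would unravel the definition $(\ref{QL})$ on a zero-mean test function $u\in L_{per,m}^2([0,L])$. Since $(1,u)_{L_{per}^2}=0$, a short computation gives
\[
(Q\mathcal{L}u,u)_{L_{per}^2}=(\mathcal{L}u,u)_{L_{per}^2}+\frac{1}{L}(\varphi^p,u)_{L_{per}^2}(1,u)_{L_{per}^2}=(\mathcal{L}u,u)_{L_{per}^2}.
\]
Consequently, the number $n(Q\mathcal{L})$ of strictly negative eigenvalues of $Q\mathcal{L}$ equals the negative index of $\mathcal{L}$ restricted to the codimension-one subspace $L_{per,m}^2([0,L])$, which is the orthogonal complement in $L_{per}^2([0,L])$ of the constant function $1$.

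Next, since both $\mathcal{L}$ and its restriction are bounded-below self-adjoint operators with compact resolvent (hence purely discrete spectrum accumulating at $+\infty$, as already used in Remark \ref{remessespec}), the Courant-Fischer min-max principle delivers the standard codimension-one interlacing
\[
\lambda_j(\mathcal{L})\leq \mu_j\leq \lambda_{j+1}(\mathcal{L}),\qquad j\geq 1,
\]
where $\lambda_j(\mathcal{L})$ and $\mu_j$ denote the ordered eigenvalues of $\mathcal{L}$ and of its restriction, respectively. Under the hypothesis $n(\mathcal{L})=1$ one has $\lambda_1(\mathcal{L})<0\leq \lambda_2(\mathcal{L})$, which forces $\mu_2\geq 0$ and hence $n(Q\mathcal{L})\leq 1$; combining this upper bound with the lower bound $n(Q\mathcal{L})\geq 1$ furnished by Proposition \ref{propR0} yields $n(Q\mathcal{L})=1$, which is the claim. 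The argument is essentially soft and I do not foresee any serious obstacle; the only point to verify carefully is that the Courant-Fischer principle applies verbatim in this periodic setting, which is immediate from the compactness of the resolvents of $\mathcal{L}$ and $Q\mathcal{L}$.
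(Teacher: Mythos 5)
Your proof is correct, but it takes a genuinely different route from the paper. You obtain the upper bound $n(Q\mathcal{L})\leq 1$ by noting that $Q\mathcal{L}$ on $L_{per,m}^2([0,L])$ is the compression of $\mathcal{L}$ to the codimension-one subspace $\{1\}^{\perp}$ (the quadratic forms agree there), and then invoking Courant--Fischer interlacing $\lambda_j(\mathcal{L})\leq\mu_j\leq\lambda_{j+1}(\mathcal{L})$; the paper instead applies the Index Theorem \cite[Theorem 5.3.2]{KP}, which gives the exact count $n(R(0))=n(\mathcal{L})-n_0-z_0$ with $n_0,z_0$ determined by the sign of $(\mathcal{L}^{-1}1,1)_{L_{per}^2}$ (and which requires first knowing, from \cite{alvesnatali}, that $\ker\mathcal{L}=\mathrm{span}\{\varphi'\}$ so that $1\perp\ker\mathcal{L}$ and $\mathcal{L}^{-1}1$ makes sense). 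Both arguments close the gap the same way, using Proposition \ref{propR0} to get $n(R(0))\geq 1$ and hence equality. Your interlacing argument is more elementary and self-contained, needing no information about $\ker\mathcal{L}$ or about $(\mathcal{L}^{-1}1,1)_{L_{per}^2}$. What the paper's heavier machinery buys is reusability: in Example b) the author faces $n(\mathcal{L})=2$, where interlacing only yields $1\leq n(R(0))\leq 2$, and the sign of $(\mathcal{L}^{-1}1,1)_{L_{per}^2}$ from the Index Theorem is exactly what decides between the two cases. For the proposition as stated, your argument is complete and arguably cleaner.
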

\begin{proof}
According with \cite[Theorem 1.1]{alvesnatali} we see that the kernel of $\mathcal{L}$ is simple and generated by $\varphi'$. In addition, the case where $\mathcal{L}$ has only one negative eigenvalue occurs only when $\varphi$ is an even positive and periodic solution associated to the equation $(\ref{ode})$ (see Theorem 1.1 and Lemma 3.1 in \cite{alvesnatali}). Since the first eigenvalue of $R(0)$ is negative, we see that $n(R(0))\geq1$, where $n(\mathcal{A})$ denotes the quantity of negative eigenvalues of a certain linear operator $\mathcal{A}$. The fact that kernel of $\mathcal{L}$ is simple enables us to conclude, by Index Theorem \cite[Theorem 5.3.2]{KP}, that $n(R(0))=n(\mathcal{L})-n_0-z_0$, where $z_0$ and $n_0$ are non-negative integers which are related with the quantity $(\mathcal{L}^{-1}1,1)_{L_{per}^2}$. In fact, if $(\mathcal{L}^{-1}1,1)_{L_{per}^2}<0$, we have $n_0=1$ and $z_0=0$, while $(\mathcal{L}^{-1}1,1)_{L_{per}^2}>0$ implies $n_0=z_0=0$ and for $(\mathcal{L}^{-1}1,1)_{L_{per}^2}=0$, we obtain $n_0=0$ and $z_0=1$. Since $n(R(0))\geq1$ and $n(\mathcal{L})=1$, we deduce $z_0=n_0=0$, so that $n(R(0))=1$ as requested.
\end{proof}

\noindent \textit{Proof of the Theorem $\ref{mainT}$.} The proof of this result is similar to \cite[Theorem 1.1]{nikolay} and because of this, we only give the main steps. We have to notice that if $n(R(0))=1$ gives us exactly assumption ${\rm (H4)}$ in \cite{nikolay}.  First, according with Remark $\ref{remessespec}$, we see that $\sigma(R(k))$ and $\sigma(P(k))$ are constituted only by a discrete set of eigenvalues of $R(k)$ and $P(k)$, respectively. Second, in terms of the inner product of $L_{per}^2([0,L])$, we see that if $k_1\geq k_2\geq0$, we have $R(k_1)\geq R(k_2)$. In addition, suppose the existence of $k>0$ and $U\neq0$, such that $R(k)U=0$. Clearly, we have $(R'(k)U,U)_{L_{per}^2}=2k(U,U)_{L_{per}^2}>0$. The two last facts are exactly assumptions ${\rm (H2)}$ and ${\rm (H3)}$ of \cite{nikolay} concerning the linear and self-adjoint operator $R(k)$.  Next, we establish the existence of $k_0>0$ such that $\ker(R(k_0))$ is one-dimensional. In fact, we define 
\begin{equation}\label{functionfk}
	f(k)=\inf_{||u||_{L_{per}^2}=1}(R(k)u,u)_{L_{per}^2}.
	\end{equation}
In any case, that is, if $\varphi$ positive\footnote{For positive and periodic solutions see Proposition $\ref{propLR}$} or a periodic solution that changes its for $(\ref{ode})$, we have that $R(0)$ has only one negative eigenvalue which is simple. Thus, there exists a unique $\beta>0$ and an associated $\chi_{\beta}\in H_{per,m}^2([0,L])\backslash\{0\}$ such that $R(0)\chi_{\beta}=Q\mathcal{L}\chi_{\beta}=-\beta \chi_{\beta}$ and $||\chi_{\beta}||_{L_{per}^2}=1$. Thus 
$$f(0)=\inf_{||u||_{L_{per}^2}=1}(R(0)u,u)_{L_{per}^2}\leq (-\beta \chi_{\beta},\chi_{\beta})_{L_{per}^2}=-\beta<0.$$
\indent On the other hand, since $R(0)$ has only one negative eigenvalue, we obtain for a large $k>0$ that $R(k)=R(0)+k^2I$ is a positive operator, so that $f(k)>0$ for large values of $k>0$. The intermediate value theorem then implies the existence of $k_0>0$ such that $f(k_0)=0$, where $f(k)<0$ for all $k\in[0,k_0)$. Using the same arguments as in \cite[Theorem 1.1]{nikolay}, we obtain that $\dim(\ker(R(k_0)))=1$.\\
\indent In addition, since $R(k_0)$ is a self-adjoint closed operator, we can use again Lemma $\ref{closedrange}$ to conclude that ${\rm range}(R(k_0))$ is a closed subspace contained in $L_{per,m}^2([0,L])$. Thus, it follows that
\begin{equation}\label{codimrk}{\rm codim}({\rm range} (R(k_0)))=\dim ({\rm ker}(R(k_0)))=1.\end{equation}
Thus, by Remark $\ref{remfred}$, Lemma $\ref{dimkerlema}$, and $(\ref{codimrk})$, we have 
\begin{equation}\label{codimrk1}{\rm codim}({\rm range} (P(k_0)))=\dim ({\rm ker}(P(k_0)^*))=\dim ({\rm ker}(P(k_0)))=1.\end{equation}
\indent To finish, we see by Remark $\ref{remessespec}$ that $\sigma(P(k_0))$ is constituted by a discrete set of isolated eigenvalues and by $(\ref{codimrk1})$ it follows that $P(k_0)$ is a Fredholm operator with zero index. We then use the Lyapunov-Schmidt method to study the eigenvalue problem $(\ref{specprob2})$ in the vicinity of $\nu = 0$, $k = k_0$ and $U=\psi$, where $\psi$ is in the kernel of $P(k_0)$ and such that $||\psi||_{L_{per}^2} = 1$. In fact, we want to find $W=\psi+V$, where $V\in\{\psi\}^{\bot}$ and our intention is to solve $G(V,k,\nu)=0$ with $\nu>0$, where
$$G(V,k,\nu)=P(k)\psi+P(k)V-\nu\partial_x^{-1}\psi-\nu\partial_x^{-1} V,\ \ \ \ V\in\{\psi\}^{\bot}.$$
The reminder of the proof is similar as the final of the proof of \cite[Theorem 1.1]{nikolay} and because of this, we omit the details.
\subsection{Examples} We present some examples to show the effectiveness of our result.\\\\
\noindent \textbf{a) Positive periodic waves.} Let $p>0$ be fixed. According with facts presented in Section 2, we obtain positive and periodic waves with turns around the equilibrium point $(((p+1)c)^{1/p},0)$ in the phase portrait and goes to the homoclinic wave for large periods. According with Theorem 1.1 and Lemma 3.1 in \cite{alvesnatali}, we see that $\mathcal{L}$ has only one negative eigenvalue which is simple and therefore, by Theorem $\ref{mainT}$ we have that all positive and periodic waves are transversally (spectrally) unstable.\\\\
\noindent \textbf{b) Periodic waves that change their sign.} Let $p$ be a positive even integer. By the arguments in Section 2, we see that equation $(\ref{ode})$ has two symmetric equilibrium points $(\pm((p+1)c)^{1/p},0)$ and, consequently, positive and negative periodic waves appear in the corresponding phase-portrait. Since both of them converges to the corresponding homoclinic waves for large periods, we obtain two symmetric solitary waves (which are positive and negative smooth functions). Turning around both solitary waves, we obtain periodic waves that change their sign with the zero mean property. Again by Theorem 1.1 and Lemma 3.1 in \cite{alvesnatali}, we deduce that $n(\mathcal{L})=2$ and thus, we can not decide if $n(R(0))$ is one or two using directly Proposition $\ref{propLR}$. In order to guarantee that $n(R(0))=1$, we need to use the Index Theorem in \cite[Theorem 5.3.2]{KP} to get a convenient formula to calculate $n(R(0))$ as $n(R(0))=n(\mathcal{L})-n_0-z_0$. As we have already mentioned in the proof of Proposition $\ref{propR0}$, quantities $z_0$ and $n_0$ are non-negative integers which are related with the quantity $(\mathcal{L}^{-1}1,1)_{L_{per}^2}$. If $(\mathcal{L}^{-1}1,1)_{L_{per}^2}<0$, we have $n_0=1$ and $z_0=0$, while $(\mathcal{L}^{-1}1,1)_{L_{per}^2}>0$ implies $n_0=z_0=0$ and for $(\mathcal{L}^{-1}1,1)_{L_{per}^2}=0$, we obtain $n_0=0$ and $z_0=1$. To apply Theorem $\ref{mainT}$, we need to prove that $(\mathcal{L}^{-1}1,1)_{L_{per}^2}\leq0$ because in this case, we have $n(R(0))=n(\mathcal{L})-n_0-z_0=1$. This fact is not so simple to obtain in general and in fact, it is possible to prove in a specific case that $(\mathcal{L}^{-1}1,1)_{L_{per}^2}\leq0$ for some values of $c$ depending on $L$ and $(\mathcal{L}^{-1}1,1)_{L_{per}^2}>0$ for other values of $c$. For instance, when $p=2$, we have periodic waves that change their sign as determined in \cite{AN1} and \cite{angulo}. Let $L_0>0$ be fixed. In \cite{AN1} the authors calculated a threshold value $c^*$ depending on the period $L_0$ where for $c\in (0,c^*]$, we have $(\mathcal{L}^{-1}1,1)_{L_{per}^2}\leq0$, so that $n(R(0))=1$ and the periodic wave that changes its $\varphi$ is transversally unstable. For $c> c^*$ we obtain, since $(\mathcal{L}^{-1}1,1)_{L_{per}^2}>0$, that $n(R(0))=2$ and Theorem $\ref{mainT}$ can not be applied. The value of $c^*$ can be explicitly determined as $c^*\approx\frac{56.277}{L_0^2}$. Next, we can obtain a similar result when $p=4$. In this case, we also have a threshold value $c^{\#}$ such that for $c\in(0,c^{\#}]$ the periodic wave that changes its $\varphi$ is transversally unstable since $(\mathcal{L}^{-1}1,1)_{L_{per}^2}\leq0$. The value of $c^{\#}$ can be explicitly computed as $c^{\#}\approx\frac{43.665}{L_0^2}$ (see \cite{NCA} for further details).

\begin{remark}
	If $p=2$ or $p=4$, we can determine the transversal (spectral) instability of the periodic wave $\varphi$ that changes its sign in the case $n(R(0))=2$. As we have already mentioned in Example b), Theorem 1.2 in \cite{nikolay} can not be applied since $P(k_0)$ is not a Fredholm operator with zero index. To prove the transversal instability in both cases, we need to use the spectral instabilities results in \cite{angulo} and \cite{NCA} for the modified and critical KdV, respectively. In fact, in both cases the spectral problem to study is similar but we need to consider $k=0$ in $(\ref{specprobmean})$, that is, we need to consider the spectral problem $\partial_xQ\mathcal{L}w=\lambda w$, where $w$ is a smooth real periodic function with the zero mean property. For $p=2$ and $c>c^*$, we obtain $(\mathcal{L}^{-1}1,1)_{L_{per}^2}>0$ and since $\frac{d}{dc}\int_0^{L}\varphi(x)^2dx>0$, we conclude by \cite[Theorem 4.2]{angulo} that the spectrum of $\partial_x Q\mathcal{L}$ has a complex eigenvalue with positive real part. By continuity, we then deduce, for $k_1>0$ small enough, that the spectrum of $\partial_x( Q\mathcal{L}+k_1^2)$ has the same property and the wave is transversally unstable according to the Definition $\ref{defistab1}$. In the case $p=4$ and $c>c^{\#}$, we have the same scenario as in the case $p=2$ by using \cite[Theorem 1.1-b)]{NCA}.
\end{remark}

\section{Remarks on the nonlinear instability.}

\indent In this section, we present some remarks concerning the nonlinear instability of the periodic wave $\varphi$ that is transversally (spectrally) unstable according to the Theorem $\ref{mainT}$. As we have already mentioned in the introduction, we need to start by assuming the following hypothesis in order to cover all possible cases of well-posedness results in the energy space $X=H^1(\mathbb{T}_L\times \mathbb{R})$ of the Cauchy problem associated to the equation $(\ref{ZK})$.

\begin{itemize}
\item[(H1)] The Cauchy problem associated to the gZK equation
	\begin{equation}\label{cauchygZK}
		\left\{\begin{array}{lll} u_t+u^pu_x+(\Delta u)_x=0,\ \ \ \ \ \ \ \
			(x,y,t)\in\mathbb{T}_L\times\mathbb{R}\times\mathbb{R}_{+},\\\\
			u(x,y,0)=u_0(x,y), \ \ \ \ \ \ \ \ \ \ \ \ 
			(x,y)\in\mathbb{T}_L\times\mathbb{R},
		\end{array}\right.
	\end{equation}
	is globally well-posed in $X$. In other words, if $u_0\in X$ there is a
	unique mild solution $u\in C([0,T];X)$, for all $T>0$. The
 data-solution map associated to the problem $(\ref{cauchygZK})$,
	\begin{equation}\label{data-sol}\begin{array}{lll}\Upsilon:X\rightarrow
			C([0,T];X)\\
			\ \ \ \ \  u_0\mapsto
			\Upsilon(u_0)=u_{u_0},\end{array}\end{equation} is smooth. In addition, associated to the equation $(\ref{cauchygZK})$, we
	have the following conserved quantities,
	\begin{equation}\label{conservada1}
		F(u)=\displaystyle\frac{1}{2}\int_{\mathbb{T}_L\times \mathbb{R}}u^2dxdy \ \ \ \ \mbox{and}
		\ \ \ E(u)=\frac{1}{2}\int_{\mathbb{T}_L\times \mathbb{R}}|\nabla u|^2-\frac{2}{(p+1)(p+2)}u^{p+2}dxdy.
	\end{equation}
\end{itemize}

\begin{remark}
	In general, local well-posedness results in  $X$ for the problem $(\ref{cauchygZK}])$ is obtained by using
	fixed point arguments. Global results in time are then obtained from the conserved quantity $E(u)=\frac{1}{2}\int_{\mathbb{T}_L\times \mathbb{R}}|\nabla u|^2-\frac{2}{(p+1)(p+2)}u^{p+2}dxdy$ and the standard Gagliardo-Nirenberg inequality. As far as we  know, this would happen when $1\leq p<2$. The smoothness of the
	data-solution map $\Upsilon$ given by $(\ref{data-sol})$ is determined by using the implicit function theorem.
\end{remark}

Some considerations deserve to be mentioned before
talking about nonlinear stability. Since equation $(\ref{ZK})$ is
invariant under translations, that is, if $u(x,t)$ is a solution then
$u(x+r,t)$ is also a solution for every $r\in\mathbb{R}$, we obtain
that the one-parameter group of unitary operators $\{S(r)\}_{r\in \mathbb
	R}$ defined by $S(r)f(\cdot)=f(\cdot+r)$  determines the
$\varphi$-orbit
$$
\Omega_{\varphi}=\{S(y)\varphi;\ y\in\mathbb{R}\}.
$$
Then, we say that $\Omega_{\varphi}$ is stable in the Hilbert space $X$ by
the flow of equation  $(\ref{ZK})$, if for all $\varepsilon>0$
there is $\delta>0$ such that if
$||u_0-\varphi||_{X}<\delta$ and $u(t)$ is
the global solution of  $(\ref{ZK})$ with initial data
$u(x,y,0)=u_0(x,y)$, then
$$
\displaystyle\inf_{r\in\mathbb{R}} ||u(t)
-S(r)\varphi||_{X}<\varepsilon,\ \mbox{for all}\ t\in \mathbb R.
$$
Otherwise, the orbit is said to be orbitally unstable in
$X$. More specifically, there exists $\eta > 0$ such that
for every $\delta > 0$, there exists $u_0^{\delta}$
 and a time $t^{\delta}>0$ such that
$||u_{0}^{\delta}-\varphi||_X<\delta$
and the solution $u^{\delta}$ of $(\ref{cauchygZK})$ with initial value $u_{0}^{\delta}$ satisfies $\inf_{r\in\mathbb{R}}||u^{\delta}(t^{\delta})-S(r)\varphi||\geq \eta$. This fact would be expected if the solution $u$ of the problem $(\ref{cauchygZK})$ has a blow-up in finite time. In our model, this fact is expected if $p\geq 2$ and for arbitrary initial data $u_0\in X$.

The following result links the nonlinear instability and the transversal (spectral) instability.

\begin{proposition}\label{henry1} Let $Y$ be a Banach space
	and $\mathcal{O}\subset Y$ an open set containing $0$. Suppose that
	$\mathcal{T}:\mathcal{O}\to Y$ is a map satisfying $\mathcal{T}(0)=0$. In addition, suppose that for some $q>1$, there exists a continuous
	linear operator $\mathcal S$ with spectral radius $r(\mathcal S)>1$
	such that
	$
	\|\mathcal{T}(w)-\mathcal S(w)\|_Y=O(\|w\|_Y^q)\;\;as\;\;w\to 0.
	$
	Then $0$ is unstable as a fixed point of $\mathcal{T}$.
\end{proposition}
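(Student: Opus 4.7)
The plan is to prove Proposition \ref{henry1} by a contradiction argument in the style of Henry and Shatah--Strauss, combining a spectral decomposition of $\mathcal{S}$, Gelfand's formula, and a discrete Duhamel iteration. Writing $\mathcal{T}(w) = \mathcal{S}w + \mathcal{R}(w)$ with $\|\mathcal{R}(w)\|_Y \leq C\|w\|_Y^q$ for $\|w\|_Y$ small, the iterates $w_n := \mathcal{T}^n(w_0)$ satisfy the discrete variation-of-constants identity
\begin{equation*}
w_n = \mathcal{S}^n w_0 + \sum_{j=0}^{n-1}\mathcal{S}^{n-1-j}\mathcal{R}(w_j),
\end{equation*}
which will serve as the main analytic tool.

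Since $r(\mathcal{S})>1$, the spectrum $\sigma(\mathcal{S})$ meets $\{|\lambda|>1\}$. I would choose $\rho \in (1,r(\mathcal{S}))$ so that the circle $\{|\lambda|=\rho\}$ is disjoint from $\sigma(\mathcal{S})$, and use the Riesz projection associated to $\sigma_u := \sigma(\mathcal{S})\cap\{|\lambda|>\rho\}$ to produce an $\mathcal{S}$-invariant splitting $Y = Y_u\oplus Y_s$. Then $\mathcal{S}|_{Y_u}$ is invertible with the spectral radius of its inverse strictly less than $1/\rho$, so Gelfand's formula provides a constant $K>0$ with $\|\mathcal{S}^n v\|_Y \geq K^{-1}\rho^n\|v\|_Y$ for every $v \in Y_u$ and every $n\geq 0$. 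This genuine exponential amplification along the unstable subspace is the element that the mere condition $r(\mathcal{S})>1$ does not directly provide, and it is what makes the nonlinear perturbation argument feasible.

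Assuming for contradiction that $0$ is stable, I would fix $\eta_0>0$ small, pick a unit vector $v \in Y_u$, and, for $\delta>0$ to be sent to $0$, take $w_0 := \delta v$, so that $\|\mathcal{S}^n w_0\|_Y \geq K^{-1}\rho^n\delta$. The stability hypothesis would force $\|w_n\|_Y < \eta_0$ for all $n$ once $\delta$ is small. A bootstrap argument based on the Duhamel identity and the spectral splitting then shows inductively that $\|w_n\|_Y \geq \tfrac12\|\mathcal{S}^n w_0\|_Y$ as long as $\|w_j\|_Y \leq 2\|\mathcal{S}^j w_0\|_Y$ holds for $j\leq n-1$: the nonlinear error is bounded by $C\,\eta_0^{q-1}\sum_{j=0}^{n-1}\|\mathcal{S}^{n-1-j}\|\,\|\mathcal{S}^j w_0\|_Y$, which is absorbed into $\tfrac12\|\mathcal{S}^n w_0\|_Y$ provided $\eta_0$ has been chosen small enough. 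Taking $n$ large then forces $\|w_n\|_Y \geq (2K)^{-1}\rho^n\delta > \eta_0$, contradicting stability.

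The main obstacle is closing this bootstrap step rigorously, and the spectral splitting is essential for it: on the stable subspace $Y_s$ the norms $\|\mathcal{S}^n\|$ may still be comparable to $\rho^n$, so there is no uniform comparison between $\|\mathcal{S}^n\|$ and the lower bound $K^{-1}\rho^n\delta$ for the unstable mode without decomposing the dynamics into its unstable and stable parts. The hypothesis $q>1$ enters through the factor $\eta_0^{q-1}$ in the error estimate, producing the decisive smallness that makes the nonlinear contribution absorbable into half of the linear growth; this is the single place where $q>1$ is used, and without it the scheme collapses.
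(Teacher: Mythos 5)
The paper does not actually prove this proposition; it simply cites Henry--Perez--Wreszinski \cite{henry}, and Remark~\ref{henry2} records the cone $\Sigma(m,\mu)$ on which that cited proof is built. Your argument takes a different route, and it has a genuine gap at its very first step: you need some $\rho\in(1,r(\mathcal S))$ such that the circle $\{|\lambda|=\rho\}$ is disjoint from $\sigma(\mathcal S)$, but the hypothesis $r(\mathcal S)>1$ alone does not supply such a $\rho$. The spectrum of a bounded operator on a Banach space can be, for instance, the segment $[\tfrac12,2]$ (multiplication by $x$ on $L^2([\tfrac12,2])$) or a closed annulus $\{\tfrac12\le|\lambda|\le2\}$ (a weighted bilateral shift); in either case every circle of radius in $(1,r(\mathcal S))$ meets $\sigma(\mathcal S)$, so no Riesz projection and no $\mathcal S$-invariant splitting $Y=Y_u\oplus Y_s$ of the kind you describe exists. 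Since the exponential lower bound $\|\mathcal S^n v\|\ge K^{-1}\rho^n\|v\|$ and the choice of initial datum $w_0=\delta v$ with $v\in Y_u$ both rest on this splitting, the scheme does not prove the proposition in the stated generality (it is essentially the hyperbolic-splitting proof, which needs a spectral gap as an extra hypothesis).

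There is a second, quantitative problem even when a gap is available. Your absorption step compares the error $C\eta_0^{q-1}\sum_{j=0}^{n-1}\|\mathcal S^{n-1-j}\|\,\|\mathcal S^jw_0\|$ with $\tfrac12\|\mathcal S^nw_0\|$. The lower bound on the unstable part grows like $\rho^n$ with $\rho$ governed by $\min\{|\lambda|:\lambda\in\sigma_u\}$, whereas $\|\mathcal S^{n-1-j}\|\,\|\mathcal S^jw_0\|$ is of order $(r(\mathcal S)+\varepsilon)^{n-1}\delta$, so the sum is of order $n\,(r(\mathcal S)+\varepsilon)^{n-1}\delta$; since $r(\mathcal S)>\rho$ in general, the ratio diverges in $n$ and no fixed choice of $\eta_0$ closes the bootstrap. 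The proof in \cite{henry} is designed to avoid both difficulties: it uses only Gelfand's identity $\|\mathcal S^m\|\ge r(\mathcal S)^m$ (valid for every $m$, since $r(\mathcal S)=\inf_m\|\mathcal S^m\|^{1/m}$) to produce arbitrarily small points outside the cone $\Sigma(m,\mu)$ of Remark~\ref{henry2}, and then shows that an orbit which starts outside this cone and remains in a small ball is amplified by a fixed factor every $m$ steps, with the condition $\mu<1/\sqrt 2$ guaranteeing that the orbit stays away from the cone; this is where $q>1$ enters, exactly as you anticipate, but no spectral decomposition is used. To repair your write-up you should either add a spectral-gap hypothesis (and take $w_0$ in the peripheral spectral subspace, with an adapted norm) or replace the splitting by the cone argument of \cite{henry}.
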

\begin{proof}
	See \cite{henry}.
\end{proof}

\begin{obs}\label{henry2} By Proposition $\ref{henry1}$, we may estimate the direction in which points move away from $0$ under successive applications of 
$\mathcal{T}$. Choose any positive integer $m$ and any $\mu$ such that 
$0<\mu<\frac{1}{\sqrt{2}}$. Define the cone
$$\Sigma =\Sigma(m,\mu)=\{w\in Y;\ ||\mathcal{S}^m(w)||_Y\leq \mu [r(\mathcal{S})]^m||w||_Y\}.$$
If $0<q< (1/\sqrt{2} -\mu)/(\mu + r(\mathcal{S})^{-m}||\mathcal{S}^m||_Y)$, there exists $a_q >0$ such that: given any $a\in (0, a_q]$, arbitrarily small $\varepsilon_0 > 0$ and arbitrarily large $N_0 > 0$, there exist $N>N_0$  and $w\in Y$ such that 
$||w||\leq \varepsilon_0$,  $||\mathcal{T}^n(w)||_{Y}\leq a$ for $0<n\leq N$ and $d(\mathcal{T}^N(w),\Sigma)\geq qa$. In particular, we have $||\mathcal{T}^N(w)||_Y\geq qa$.
\end{obs}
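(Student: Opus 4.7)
The plan is to upgrade the qualitative instability of Proposition \ref{henry1} into a quantitative escape-from-cone statement by tracking the dominant linear behaviour of $\mathcal{S}$ on the iterates of $\mathcal{T}$. The central observation is that the upper bound on $q$, namely $q<(1/\sqrt{2}-\mu)/(\mu+r(\mathcal{S})^{-m}\|\mathcal{S}^m\|_Y)$, is calibrated exactly so that the nonlinear remainder $O(\|w\|_Y^q)$ cannot fill the gap between the growth rate $[r(\mathcal{S})]^m$ available in some direction outside $\Sigma$ and the ceiling $\mu[r(\mathcal{S})]^m$ allowed by the cone definition.

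First I would build a suitable seed. Since $r(\mathcal{S})>1$, Gelfand's formula $r(\mathcal{S})=\lim_n\|\mathcal{S}^n\|_Y^{1/n}$ yields, for any prescribed small $\eta>0$, an integer $M\geq m$ and a unit vector $e\in Y$ such that $\|\mathcal{S}^M e\|_Y\geq(1-\eta)[r(\mathcal{S})]^M$; in particular $e$ lies outside $\Sigma$ in a quantitative sense. Set $w_\varepsilon=\varepsilon e$ with $\varepsilon\in(0,\varepsilon_0]$, so that $\|w_\varepsilon\|_Y=\varepsilon\leq\varepsilon_0$ is immediate.

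Next I would compare the nonlinear with the linear iterates. Writing $\mathcal{T}^n(w_\varepsilon)=\mathcal{S}^n(w_\varepsilon)+R_n$, the hypothesis $\|\mathcal{T}(z)-\mathcal{S}(z)\|_Y=O(\|z\|_Y^q)$ and induction on $n$ yield
\[
\|R_n\|_Y\;\leq\;C\sum_{j=0}^{n-1}\|\mathcal{S}\|_Y^{n-1-j}\,\|\mathcal{T}^j(w_\varepsilon)\|_Y^q,
\]
valid as long as $\|\mathcal{T}^j(w_\varepsilon)\|_Y\leq a$ for every $j<n$. A discrete Gronwall-type argument, together with the smallness of $a$ encoded by the choice of $a_q$, shows that $\|R_n\|_Y$ remains a small fraction of $\|\mathcal{S}^n(w_\varepsilon)\|_Y$ throughout this window. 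I would then take $N$ to be the largest integer for which $\|\mathcal{T}^n(w_\varepsilon)\|_Y\leq a$ holds for all $0<n\leq N$. Since $\|\mathcal{S}^n(w_\varepsilon)\|_Y$ eventually exceeds $a$ (because $\varepsilon>0$ and $r(\mathcal{S})>1$), such an $N$ is finite, and choosing $\varepsilon$ small forces $N>N_0$; at the same time the definition of $N$ guarantees that $\|\mathcal{T}^N(w_\varepsilon)\|_Y$ is comparable to $a$.

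Finally, to prove $d(\mathcal{T}^N(w_\varepsilon),\Sigma)\geq qa$, I would argue by contradiction: if some $\sigma\in\Sigma$ satisfied $\|\mathcal{T}^N(w_\varepsilon)-\sigma\|_Y<qa$, then applying $\mathcal{S}^m$ and using
\[
\|\mathcal{S}^m\sigma\|_Y\leq\mu[r(\mathcal{S})]^m\|\sigma\|_Y,\qquad \|\mathcal{S}^m\mathcal{T}^N(w_\varepsilon)\|_Y\geq\tfrac{1}{\sqrt{2}}[r(\mathcal{S})]^m\|\mathcal{T}^N(w_\varepsilon)\|_Y,
\]
where the second estimate comes from the choice of $e$ together with the control on $R_N$, the triangle inequality yields an algebraic contradiction with the numerical constraint on $q$. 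The principal obstacle is the bookkeeping in this last step: the constants must combine so that the factor $r(\mathcal{S})^{-m}\|\mathcal{S}^m\|_Y$, which arises when $\|\mathcal{S}^m(\mathcal{T}^N(w_\varepsilon)-\sigma)\|_Y$ is estimated from above, ends up exactly in the denominator of the stated threshold, so that no slack in the Gronwall estimate for $R_n$ or in the lower bound for $\|\mathcal{S}^m\mathcal{T}^N(w_\varepsilon)\|_Y$ is allowed to erode the explicit value of $q$ declared in the hypothesis.
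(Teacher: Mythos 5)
The paper itself offers no proof of this remark: it is quoted (with the proof deferred) from Henry--Perez--Wreszinski \cite{henry}, so your attempt has to be measured against their argument. Your endgame is exactly the right mechanism: assuming $d:=\|\mathcal{T}^N(w)-\sigma\|_Y<qa$ for some $\sigma\in\Sigma$, the chain $\|\mathcal{S}^m(\mathcal{T}^N(w)-\sigma)\|_Y\le\|\mathcal{S}^m\|_Y\,d$, $\|\mathcal{S}^m\sigma\|_Y\le\mu[r(\mathcal{S})]^m\|\sigma\|_Y$, $\|\sigma\|_Y\le\|\mathcal{T}^N(w)\|_Y+d$, played against the lower bound $\|\mathcal{S}^m\mathcal{T}^N(w)\|_Y\ge\frac{1}{\sqrt{2}}[r(\mathcal{S})]^m\|\mathcal{T}^N(w)\|_Y$, yields $d\ge(1/\sqrt{2}-\mu)\|\mathcal{T}^N(w)\|_Y/\bigl(\mu+r(\mathcal{S})^{-m}\|\mathcal{S}^m\|_Y\bigr)$, which is precisely the stated threshold for $q$; and the discrete Gronwall control of $R_n$ on the window where $\|\mathcal{T}^j(w)\|_Y\le a$ is the standard and correct bookkeeping.

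The genuine gap is the seed. Gelfand's formula gives a unit vector $e$ with $\|\mathcal{S}^M e\|_Y\ge(1-\eta)[r(\mathcal{S})]^M$ at a single time $M$; this carries no directional information and does not yield the inequality you quietly invoke at the exit time, namely $\|\mathcal{S}^m\mathcal{T}^N(w_\varepsilon)\|_Y\ge\frac{1}{\sqrt{2}}[r(\mathcal{S})]^m\|\mathcal{T}^N(w_\varepsilon)\|_Y$. The exit time $N$ is determined by the dynamics, not at your disposal, and $\mathcal{S}^N e$ may point anywhere relative to the cone; indeed $r(\mathcal{S})>1$ does not even force $\|\mathcal{S}^n e\|_Y\to\infty$ for a fixed $e$ (think of a diagonal operator with entries $2$ and $1/2$ and $e$ in the contracting direction), so your claims that $N$ is finite and that $N>N_0$ can be arranged are also unsupported by this construction. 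What the proof in \cite{henry} actually does is choose a peripheral spectral point $\lambda$ with $|\lambda|=r(\mathcal{S})$, pass to the complexification, and take an approximate eigenvector $z=u+iv$ with $\|(\mathcal{S}-\lambda)z\|\le\delta$, with $\delta$ tuned to the length of the escape window; then $\mathcal{S}^n$ acts along the orbit essentially as multiplication by $\lambda^n$, so being quantitatively outside $\Sigma$ propagates up to time $N$, and the selection between the real and imaginary parts (equivalently, of the initial phase of $z$) is exactly what produces the constant $1/\sqrt{2}$ --- the sole reason the hypothesis $\mu<1/\sqrt{2}$ appears at all. Without that spectral construction, your cone lower bound has no source, so the proposal does not prove the remark. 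A cosmetic point: you use $q$ both for the nonlinearity exponent of Proposition \ref{henry1} and for the cone fraction; since $\|\mathcal{S}^m\|_Y\ge[r(\mathcal{S})]^m$ forces the threshold to be smaller than $1$, the $q$ of the remark is necessarily $<1$ and is a different parameter from the exponent $q>1$.
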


\begin{coro}\label{corohen} Let $\mathcal{W}:\mathcal{O}\subset Y\to Y$ be a
	$C^{2}$ map defined in an open neighbourhood of a fixed point $\phi$. If there exists $\mu\in \sigma(\mathcal{W}'(\phi))$ such that
	$|\mu|>1$, then $\phi$ is an unstable fixed point of $\mathcal{W}$.
\end{coro}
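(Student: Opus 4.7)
The plan is to reduce this statement directly to Proposition \ref{henry1}, which gives instability of the fixed point $0$ for a map $\mathcal{T}$ whose linear part has spectral radius greater than $1$. The natural strategy is to translate the problem so that the fixed point moves to the origin, and then to use the $C^2$ hypothesis to control the nonlinear remainder.

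First I would set $\mathcal{T}(w) := \mathcal{W}(\phi+w)-\phi$ defined on the open set $\mathcal{O}-\phi\subset Y$, which contains $0$. Since $\phi$ is a fixed point of $\mathcal{W}$, we have $\mathcal{T}(0)=0$, and instability of $\phi$ under $\mathcal{W}$ is clearly equivalent to instability of $0$ under $\mathcal{T}$. Next, I would take $\mathcal{S}:=\mathcal{W}'(\phi)$, which is a bounded linear operator on $Y$ by hypothesis. Because $\mu\in\sigma(\mathcal{S})$ with $|\mu|>1$, the spectral radius satisfies $r(\mathcal{S})\geq|\mu|>1$, matching the assumption of Proposition \ref{henry1}.

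It remains to control $\mathcal{T}(w)-\mathcal{S}w$ near the origin. Since $\mathcal{W}$ is $C^{2}$ on a neighbourhood of $\phi$, Taylor's theorem with integral remainder gives
\begin{equation*}
\mathcal{T}(w)=\mathcal{W}(\phi+w)-\mathcal{W}(\phi)=\mathcal{W}'(\phi)w+\int_{0}^{1}(1-s)\,\mathcal{W}''(\phi+sw)[w,w]\,ds,
\end{equation*}
so that, on a possibly smaller neighbourhood of $0$ where $\|\mathcal{W}''\|$ is bounded, one obtains $\|\mathcal{T}(w)-\mathcal{S}w\|_{Y}=O(\|w\|_{Y}^{2})$ as $w\to 0$. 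Thus the hypotheses of Proposition \ref{henry1} are met with $q=2>1$, and the proposition yields instability of $0$ as a fixed point of $\mathcal{T}$, hence instability of $\phi$ as a fixed point of $\mathcal{W}$.

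There is no genuine obstacle here; the statement is essentially a corollary of Proposition \ref{henry1}. The only point requiring a little care is ensuring that $\mathcal{T}$ is defined on an open neighbourhood of $0$ and that the quadratic remainder is uniform, both of which follow from the $C^{2}$ assumption by shrinking $\mathcal{O}$ if necessary. I would therefore keep the write-up short, essentially presenting the translation trick, the Taylor estimate, and the direct invocation of Proposition \ref{henry1}.
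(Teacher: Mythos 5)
Your proposal is correct and follows essentially the same route as the paper: translate the fixed point to the origin via $\mathcal{T}(w)=\mathcal{W}(\phi+w)-\phi$, use the $C^2$ hypothesis and Taylor's formula to get a quadratic remainder, and conclude from the spectral radius bound $r(\mathcal{W}'(\phi))\geq|\mu|>1$. The only cosmetic difference is that the paper invokes Remark \ref{henry2} (the quantitative elaboration of Proposition \ref{henry1}) rather than the proposition itself, which changes nothing of substance.
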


\begin{proof}  For $w\in U\equiv\{v-\phi;\ v\in \mathcal{O}\}$, let us consider $\mathcal{T}(w)\equiv \mathcal{W}(w+\phi)-\phi$. Since $\phi$ is a fixed point of $\mathcal{W}$, we have
	$\mathcal{T}(0)=\mathcal{W}(\phi)-\phi=0$ with $1<|\mu|\leq r(\mathcal{W}'(\phi))$. By
	Taylor's formula, we obtain $ \mathcal{T}(w)=\mathcal{T}(0)+\mathcal{T}'(0)w+ O(\|w\|_Y^2)=\mathcal{W}'(\phi)
	(w)+O(\|w\|_Y^2)$ for all  $\|w\|_Y<<1$. Therefore, by Remark
	$\ref{henry2}$, there exists $\varepsilon_0>0$ such that for all $\eta>0$
 and a large enough $N_0\in \mathbb N$, there exists
	 $N>N_0$ and $v\in B(\phi;\eta)$ such that
	$\|\mathcal{W}^N(v)-\phi\|_Y\geq \varepsilon_0.$ This finishes the proof.
\end{proof}

\begin{prop}\label{insta}  The periodic solution $\varphi$ that is transversally (spectrally) unstable according to Theorem
	$\ref{mainT}$ is nonlinearly unstable.
\end{prop}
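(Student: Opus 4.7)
The plan is to apply Corollary \ref{corohen} to the time-$T$ flow map of the gZK equation after passing to the moving frame. Writing the equation in coordinates $(\tilde{x}, y, t) = (x - ct, y, t)$ turns \eqref{ZK} into an evolution for which $\varphi$ is a stationary solution, and a direct linearization about $\varphi$ reproduces the operator $\mathcal{A} := \partial_{\tilde{x}}(\mathcal{L} - \partial_y^2)$ of \eqref{lineareq1}. Under hypothesis (H1), for any fixed $T > 0$ the time-$T$ map $\mathcal{W} : X \to X$ defined by $\mathcal{W}(u_0) = \Upsilon(u_0)(T)$ is a $C^2$ map with $\mathcal{W}(\varphi) = \varphi$, and its Fr\'echet derivative is the $C_0$-semigroup $\mathcal{W}'(\varphi) = e^{T\mathcal{A}}$ acting on $X$.

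The central step is to exhibit a spectral point of $\mathcal{W}'(\varphi)$ of modulus strictly greater than one. By Theorem \ref{mainT}, there exist $\lambda \in \mathbb{C}$ with $\mathrm{Re}(\lambda) > 0$, a nonzero $k$, and a smooth $L$-periodic $w$ satisfying $\partial_{\tilde{x}}(\mathcal{L} + k^2 I)w = \lambda w$. Then $\Psi(\tilde{x}, y, t) = e^{\lambda t} e^{iky} w(\tilde{x})$ is a formal growing mode of the linearized equation. Although $e^{iky} \notin L^2(\mathbb{R})$ prevents $\Psi(\cdot,\cdot,0)$ from directly belonging to $X$, truncating in $y$ by a one-parameter family of smooth cutoffs $\chi_R(y)$ with $\|\chi_R\|_{L^2(\mathbb{R})} = 1$ and $\chi_R$ spread on scales $R \to \infty$ yields a Weyl sequence $\Phi_R(\tilde{x}, y) = \chi_R(y) e^{iky} w(\tilde{x})$ for $e^{T\mathcal{A}} - e^{T\lambda} I$ in $X$; the defect is controlled by derivatives of $\chi_R$, which tend to zero in $L^2$. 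Consequently $e^{T\lambda}$ belongs to the (essential) spectrum of $\mathcal{W}'(\varphi)$, and its modulus $e^{T\,\mathrm{Re}(\lambda)}$ exceeds one, so that $r(\mathcal{W}'(\varphi)) > 1$. Corollary \ref{corohen} then gives that $\varphi$ is an unstable fixed point of $\mathcal{W}$ in $X$.

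The main obstacle is to upgrade this fixed-point instability to orbital instability in the sense of Section 4, since Corollary \ref{corohen} does not by itself quotient by the translation group. The resolution rests on the observation that the tangent to $\Omega_\varphi$ at $\varphi$ is spanned by $\partial_{\tilde{x}}\varphi$, a function independent of $y$, whereas the unstable Weyl sequence constructed above lies entirely in the nontrivial $y$-Fourier mode of frequency $k \neq 0$. Hence the unstable direction is $L^2$-orthogonal to the orbit's tangent direction, and one can choose the cone $\Sigma$ in Remark \ref{henry2} aligned with this $y$-oscillatory subspace. The quantitative escape from $\Sigma$ supplied by Remark \ref{henry2} is then incompatible with any approximation by an element of $\Omega_\varphi$ (whose elements have zero projection on the $k$th $y$-mode), yielding the desired $\inf_{r \in \mathbb{R}} \|\mathcal{W}^N(u_0^\delta) - S(r)\varphi\|_X \geq \eta$ for data $u_0^\delta$ arbitrarily close to $\varphi$. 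Transferring back to the original frame completes the proof of Proposition \ref{insta}.
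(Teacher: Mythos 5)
Your proposal follows the same route as the paper's own proof: pass to the moving frame $\xi=x-ct$, consider the time-$T$ solution map $\mathcal{W}$ (the paper takes $T=1$), use (H1) to conclude that $\mathcal{W}$ is a $C^2$ map fixing $\varphi$, exhibit a spectral point of $\mathcal{W}'(\varphi)$ of modulus $e^{T\nu}>1$ coming from the growing mode of Theorem \ref{mainT}, and invoke Corollary \ref{corohen}. The difference is that you are more careful at two points the paper passes over. First, the paper asserts $\mathcal{W}'(\varphi)U=\alpha U$ with ``$U\in X\setminus\{0\}$'', but the growing mode $e^{iky}w(x)$ is not square-integrable in $y\in\mathbb{R}$, so $e^{T\nu}$ is not an eigenvalue of $\mathcal{W}'(\varphi)$ on $X=H^1(\mathbb{T}_L\times\mathbb{R})$; your truncation $\chi_R(y)e^{iky}w(x)$ places it correctly in the approximate point spectrum, which still forces $r(\mathcal{W}'(\varphi))>1$ as Proposition \ref{henry1} requires. (To control the defect you do need the eigenvalue branch $\lambda(\eta)$ and its eigenfunction to vary continuously in the transverse frequency $\eta$ near $k$, which the Lyapunov--Schmidt construction in the proof of Theorem \ref{mainT} supplies.) Second, you address the passage from instability of the fixed point --- which is all that Corollary \ref{corohen} delivers --- to orbital instability modulo translations, exploiting that the unstable direction lives in the $k$-th $y$-Fourier mode while every element of $\Omega_\varphi$ is $y$-independent; the paper's proof stops at fixed-point instability and does not discuss the quotient by the orbit. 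Both refinements are legitimate and in fact patch real gaps in the published argument.
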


\begin{proof}
	By equation $(\ref{ZK})$, we see that
	$u(x-ct,y,t)$ is a solution of the equation
	\begin{equation}\label{newgZK}
		u_t-cu_{\xi}+u^pu_{\xi}+(\Delta u)_{\xi}=0,
	\end{equation}
	where $\xi=x-ct$. In addition, the periodic wave $\varphi$ obtained in Section 2 is now an
	equilibrium solution of the equation $(\ref{newgZK})$. Consider $G(u)=E(u)+cF(u)$,
	where $E$ and $F$ are  given by $(\ref{conservada1})$.
	We have that  $(\ref{newgZK})$ can be rewritten as
	\begin{equation}\label{hamiltgBBM2}
		u_t=JG'(u),
	\end{equation}
	where $J=\partial_x$.  Moreover, from $(\ref{hamiltgBBM2})$
	the linearized equation at the equilibrium point $\varphi$ is
	 $v_t=J(\mathcal{L}-\partial_y^2)v$, where $\mathcal{L}$ is the linear operator given by $(\ref{operator})$. Let us consider $v(x,y,t)=e^{iky}w(x,t)$ in the linearized equation to obtain $w_t=J(\mathcal{L}+k^2)w$.

	Define $\mathcal{W}:X\rightarrow X$ as $\mathcal{W}(u_0)=u_{u_0}(1)$,
	where $u_{u_0}(t)$ is the solution of $(\ref{newgZK})$ with initial data $u(x,y,0)=u_0(x,y)$ at $t=1$.
	For each $T>0$, function $\Upsilon:X\rightarrow C([0,T];X)$ is the
	data-solution map related to the equation
	$(\ref{newgZK})$ and by assumption (H1),
	$\Upsilon$ is smooth. Again by (H1), the uniqueness of solutions for the Cauchy problem $(\ref{cauchygZK})$ gives us that $\mathcal{W}(\varphi)=\varphi$
	and $\mathcal{W}$ is a $C^2$ map
	defined in a neighbourhood of $\varphi$ (this fact follows
	from the translation in $x$ as a linear continuous map defined $X$).
	Moreover, for $h(x,y)=e^{iky}g(x)\in X$ we have $\mathcal{W}'(\varphi)h=w_h(1)$, where
	$w_h(1)$ is the solution of the linear initial value problem
	\begin{equation}\label{linear1}\left\{\begin{array}{lll}
			w_t=J(\mathcal{L}+k^2)w\\
			w(0)=h,
		\end{array}\right.
	\end{equation}
	evaluated at $t=1$. Then, using Theorem $\ref{mainT}$, we obtain the existence of $\nu>0$, $k\neq0$ and $U\in X\backslash\{0\}$ such that
	$J(\mathcal{L}+k^2)U=\nu U$. Hence, for $w_U(t)=e^{\nu t} U$ and
	$\alpha= e^\nu$, we obtain $\mathcal{W}'(\varphi)U=w_U(1)=\alpha U$, that is,
	$\alpha\in\sigma(\mathcal{W}'(\varphi))$. By Corollary
	$\ref{corohen}$, we obtain the nonlinear instability in $Y$ of the periodic solution $\varphi$ that is transversally (spectrally) unstable according to the Theorem $\ref{mainT}$.
\end{proof}
	
	\section*{Acknowledgments}
F. Natali is partially supported by Funda\c c\~ao Arauc\'aria/Brazil (grant 002/2017) and CNPq/Brazil (grant 303907/2021-5).

\end{document}